\documentclass{article}

\usepackage{arxiv}

\usepackage[utf8]{inputenc} 
\usepackage[T1]{fontenc}    
\usepackage{hyperref}       
\usepackage{url}            
\usepackage{booktabs}       
\usepackage{amsfonts}       
\usepackage{nicefrac}       
\usepackage{microtype}      
\usepackage{lipsum}
\usepackage{amsmath}

\usepackage{amsfonts,amssymb}
\usepackage{graphicx}
\usepackage{amsthm}
\usepackage{xcolor}

\newcommand{\R}{\mathbb{R}}

\newtheorem{theorem}{Theorem}[section]
\newtheorem{proposition}{Proposition}[section]
\newtheorem{lemma}{Lemma}[section]

\usepackage{cite}

\newcommand{\bb}{\begin{equation}}
\newcommand{\ee}{\end{equation}}
\newcommand{\ba}{\begin{array}}
\newcommand{\ea}{\end{array}}

\usepackage[all]{xy}

\newcommand{\sobolev}[2]{\Vert #1\Vert_{H^{#2}}}
\newcommand{\hm}[3]{\vert\vert\vert #1\vert\vert\vert_{E_{#2,#3}}}
\newcommand{\km}[3]{\Vert #1\Vert_{#2,#3}}
\newcommand{\kma}[2]{\Vert #1\Vert_{#2,2,m}}

\numberwithin{equation}{section}

\title{Global analytic solutions of a pseudospherical Novikov equation}

\author{
 Priscila~Leal~da Silva\\
  Departamento de Matemática, Universidade Federal de São Carlos, Brazil.\\
 Department of Mathematical Sciences, School of Science, Loughborough University, Loughborough, UK.\\
  \texttt{priscilals@ufscar.br and pri.leal.silva@gmail.com} \\
}

\begin{document}
\maketitle

\begin{abstract}
In this paper we consider a Novikov equation, recently shown to describe pseudospherical surfaces, to extend some recent results of regularity of its solutions. By making use of the global well-posedness in Sobolev spaces, for analytic initial data in Gevrey spaces we prove some new estimates for the solution in order to use the Kato-Masuda Theorem and obtain a lower bound for the radius of spatial analyticity. After that, we use embeddings between spaces to then conclude that the unique solution is, in fact, globally analytic in both variables. Finally, the global analyticity of the solution is used to prove that it endows the strip $(0,\infty)\times \R$ with a global analytic metric associated to pseudospherical surfaces obtained in \cite{SfF}.
\end{abstract}

\keywords{Global well-posedness, analytic solutions, pseudospherical surfaces}


\section{Introduction}

In this paper we consider the Cauchy problem
\begin{align}\label{nonlocal}
u_t =\partial_x u^2 +\partial_x\Lambda^{-2}\left(u^2 + \partial_x u^2\right),\quad u(0,x):=u_0(x),
\end{align}
where $u=u(t,x)$ and $\Lambda^{-2}$ denotes the inverse of the Helmholtz operator $\Lambda^2=1-\partial_x^2$. Equation \eqref{nonlocal} was originally deduced in \cite{Novikov} through the perturbative symmetry approach as one of the symmetry-integrable generalizations of the Camassa-Holm equation \cite{CH}. More than a decade later, in \cite{FT} the authors proved that solutions of \eqref{nonlocal} describe nontrivial families of pseudospherical surfaces with associated $1$-forms $\omega_i = f_{i1}dx + f_{i2}dt$, $i=1,2,3$, given by
\begin{align*}
    &f_{11}=u-u_{xx},\quad f_{12} = 2u(u-u_{xx})+\frac{4}{m_1}uu_x -2u_x^2 -2u^2,\\
    &f_{21} = \mu(u-u_{xx})\pm m_1\sqrt{1+\mu^2},\quad f_{22}=\mu\left[2u(u-u_{xx})+\frac{4}{m_1}uu_x - 2u_x^2-2u^2\right],\\
    &f_{31}=\pm \sqrt{1+\mu^2}(u-u_{xx})+m_1\mu,\quad f_{32}=\pm \sqrt{1+\mu^2}\left[2u(u-u_{xx})+\frac{4}{m_1}uu_x-2u_x^2-2u^2\right],
    \end{align*}
where $\mu$ is an arbitrary real parameter and $m_1=-2,1$. For $m_1=-2$, the corresponding conservation laws are nontrivial and the AKNS formulation \cite{AKNS}
$$\Psi_x(t,x) = X \Psi(t,x),\quad \Psi_t(t,x)=T\Psi(t,x),\quad X_t-T_x+[X,T]=0,$$
can be formulated for the $\texttt{sl}(2;\mathbb{R})$ matrices
$$X=\frac{1}{2}\begin{pmatrix}
f_{21}&f_{11}-f_{31}\\
f_{11}+f_{31} & -f_{21}
\end{pmatrix},\quad
T=\frac{1}{2}\begin{pmatrix}
f_{22}&f_{12}-f_{32}\\
f_{12}+f_{32} & -f_{22}
\end{pmatrix}.$$

In recent works, equation \eqref{nonlocal} was also studied from the point of view of qualitative properties of its solutions. Firstly we mention \cite{MLGL}, where the authors used Friedrichs mollifiers to prove that \eqref{nonlocal} is locally well-posed for initial data in $H^s(\R)$, with $s>3/2$, even though the data-to-solution map is not uniformly continuous. Equation \eqref{nonlocal} is also known to be well-posed in Besov spaces \cite{LY} leading to the global extension of the the local solution provided that $m_0(x):= u_0(x) - u_0''(x)$ is nonnegative, and also exhibitting conditions for wave-breaking. Of more importance to us and the main motivation for this paper, in \cite{MLGL} the authors showed that there exists a unique local analytic solution for the Cauchy problem with initial data in the Himonas-Misiolek space \cite{misiolek}. Moreover, some of these results combined with \cite{FT} lead to the explicit construction of the metric
\begin{align}\label{metric}
    \begin{aligned}
        g=&\left[(u-u_{xx})^2+(\mu(u-u_{xx})\pm m_1\sqrt{1+\mu^2})^2\right]dx^2\\
        &+2\left(2u(u-u_{xx})+\psi\right)\left[(1+\mu^2)(u-u_{xx})\pm m_1\mu\sqrt{1+\mu^2}\right]dxdt\\
        &+(1+\mu^2)\left(2u(u-u_{xx})+\psi\right)^2dt^2,
    \end{aligned}
\end{align}
where $\psi = 4(uu_x)/m_1-2u_x^2-2u^2$ and $\mu,m_1$ are given as before, for analytic pseudospherical surfaces, see \cite{SfF}.

Equation (1.1) has been referred to by various names, including generalised Camassa-Holm equation and generalised Degasperis-Procesi equation (see \cite{LY,MLGL}). However, due to the absence of the typical peakon solutions found in the Camassa-Holm and Degasperis-Procesi equations, as well as a known bi-Hamiltonian formulation, this equation appears to be significantly different from both, at least from the perspective of integrable systems. Consequently, we choose to refer to it as ``pseudospherical Novikov equation'', as discussed in references \cite{DF,FT}, where it was examined from a geometric standpoint.

The main result of this paper, as stated in Theorem \ref{thr1} below, generalizes the result in \cite{MLGL} by showing that the solution is, in fact, globally analytic in both variables. For that we restrict our analysis to analytic initial data in analytic Gevrey spaces to then extend the solution by making use of global well-posedness in Sobolev spaces. Given $u\in L^2(\R)$, we say that $u$ is in the Gevrey space $G^{\sigma,s}(\R)$, for $\sigma>0$ and $s\in\R$, if the norm
$$\Vert u\Vert_{G^{\sigma,s}}:=\Vert e^{\sigma|D_x|}u\Vert_{H^s} = \left(\int_{\R} e^{2\sigma |\xi|}(1+|\xi|^2)^s |\hat{u}(\xi)|^2d\xi\right)^{1/2}$$
is finite, where $\hat{u}$ denotes the spatial Fourier transform
$$\hat{u}(\xi) = \frac{1}{\sqrt{2\pi}}\int_{\R}e^{-ix\xi}u(x)dx.$$
Following the Paley-Wiener characterization \cite{Kat}, a function in $G^{\sigma,s}(\R)$ is the real line restriction a holomorphic function on a strip of width $2\sigma$. In the case where $\sigma=1$, functions are analytic and for this purpose we will consider our initial data in $G^{1,s}(\R)$ for $s$ greater than a certain positive number.

While the form \eqref{nonlocal} is very convenient to use algebra properties and obtain local results for well-posedness, it is not the best for our main purpose. Through the identity $\Lambda^{-2}\partial_x^2 = \Lambda^{-2}-1$, we can rewrite \eqref{nonlocal1} as
\begin{align}\label{nonlocal1}
u_t =\partial_x u^2 -u^2+\partial_x\Lambda^{-2}u^2 + \Lambda^{-2} u^2,
\end{align}
and this form will be used to prove the following main result:

\begin{theorem}\label{thr1}
    Given $u_0\in G^{1,s}(\R)$, with $s>3/2$, if $m_0(x)$ is non-negative, then the Cauchy problem of \eqref{nonlocal} has a unique global analytic solution $u\in C^{\omega}([0,\infty)\times \R)$. Moreover, the radius of spatial analyticity satisfies the lower bound
    \begin{align}\label{estRad}
        r(t) \geq L_3e^{-L_1e^{L_2t}},
    \end{align}
    where the positive parameters $L_1$ and $L_3$ depend only on the initial data and $L_2$ depend on the $H^2(\R)$-norm of the solution.
\end{theorem}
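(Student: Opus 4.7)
\emph{Stage 1 (global Sobolev solution).} Since $G^{1,s}(\R)\hookrightarrow H^s(\R)$ continuously, the initial datum lies in $H^s(\R)$ with $s>3/2$, and combining this with the sign condition $m_0\geq 0$ the global well-posedness results of \cite{LY,MLGL} produce a unique $u\in C([0,\infty),H^s(\R))$ together with quantitative control of $\|u(t)\|_{H^2}$. This $H^2$-bound is what prevents the blowup criteria of \cite{LY} from triggering and will drive the parameter $L_2$ appearing in \eqref{estRad}.

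\emph{Stage 2 (Kato-Masuda on the Gevrey scale).} I would apply Kato-Masuda to the scale $\{G^{\sigma,s}\}_{\sigma\in(0,1]}$ — equivalently the triple-bar scale $\hm{\cdot}{\sigma}{s}$ — with vector field $F(u)=\p_x u^2-u^2+\p_x\Lambda^{-2}u^2+\Lambda^{-2}u^2$ read off from \eqref{nonlocal1}. Since $G^{\sigma,s}$ is a Banach algebra for $s>1/2$ and $\Lambda^{-2}$ is smoothing, the zeroth-order pieces $u^2$ and $\Lambda^{-2}u^2$ are bounded directly by $\|u\|_{G^{\sigma,s}}^2$; the derivative-losing terms $\p_x u^2$ and $\p_x\Lambda^{-2}u^2$ are handled by
\begin{equation*}
\|e^{\sigma'|D_x|}\p_x v\|_{H^s}\leq \frac{1}{e(\sigma-\sigma')}\|e^{\sigma|D_x|}v\|_{H^s},\qquad 0<\sigma'<\sigma\leq 1,
\end{equation*}
a consequence of $|\xi|e^{-(\sigma-\sigma')|\xi|}\leq [e(\sigma-\sigma')]^{-1}$. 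Together these verify the Kato-Masuda continuity hypothesis for $F$ with modulus of continuity proportional to $\|u\|_{H^2}/(\sigma-\sigma')$, and the abstract theorem then outputs a differential inequality of the shape $-\dot\sigma(t)\leq C\|u(t)\|_{H^2}\,\sigma(t)$, valid as long as $\hm{u(t)}{\sigma(t)}{s}\leq 2\hm{u_0}{1}{s}$.

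\emph{Stage 3 (radius estimate and joint analyticity).} Integrating this ODE and inserting the $H^2$-bound from Stage 1 yields
\begin{equation*}
\sigma(t)\geq \sigma(0)\exp\Bigl(-C\int_0^t\|u(\tau)\|_{H^2}\,d\tau\Bigr)\geq L_3\,e^{-L_1 e^{L_2 t}},
\end{equation*}
with $L_1,L_3$ determined by $\|u_0\|_{G^{1,s}}$ and $L_2$ by the $H^2$-growth constant, which is precisely \eqref{estRad}; Paley-Wiener \cite{Kat} then provides, for each $t>0$, a holomorphic extension of $u(t,\cdot)$ to a strip of width $\sigma(t)$. To promote spatial analyticity to joint analyticity on $(0,\infty)\times\R$ I would differentiate \eqref{nonlocal1} in $t$ repeatedly: each $\p_t^k u$ is a polynomial expression in $\{\p_x^j u\}_{j\leq k}$ composed with the smoothing operators $\Lambda^{-2}$ and $\p_x\Lambda^{-2}$, so the spatial factorial bounds transfer to mixed $(t,x)$-factorial bounds and yield $u\in C^{\omega}([0,\infty)\times\R)$. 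The main technical obstacle is the Kato-Masuda estimate in Stage 2 — specifically, balancing the Gevrey algebra bound against the $\p_x$-derivative loss so that the singularity at $\sigma'\uparrow\sigma$ is exactly of first order; everything downstream, including the ODE analysis, the Paley-Wiener step, and the time-direction bootstrap, is essentially mechanical.
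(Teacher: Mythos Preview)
Your outline follows the same three-stage architecture as the paper (global Sobolev solution $\to$ Kato--Masuda $\to$ joint analyticity), but Stage~2 contains a genuine gap: you are conflating the Ovsyannikov hypothesis with the Kato--Masuda hypothesis. The cross-scale bound $\|\p_x v\|_{G^{\sigma',s}}\leq [e(\sigma-\sigma')]^{-1}\|v\|_{G^{\sigma,s}}$ is exactly what feeds the Autonomous Ovsyannikov Theorem (the paper's Proposition~\ref{teo1.5}) and yields \emph{local} analytic well-posedness; it is \emph{not} the Kato--Masuda hypothesis, which asks for an inequality of the form
\[
\langle D\Phi_\sigma(u),F(u)\rangle \;\leq\; \beta(\Phi_\sigma(u)) \;+\; \alpha(\Phi_\sigma(u))\,\p_\sigma\Phi_\sigma(u)
\]
at a single value of $\sigma$, obtained by a term-by-term energy estimate on $\sum_j (j!)^{-2}e^{2\sigma j}\langle \p_x^j u,\p_x^j F(u)\rangle_{H^2}$ (this is the paper's Proposition~\ref{prop7}, and it is the real workhorse). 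The output of that estimate has $\alpha$ proportional to $\Phi_\sigma(u)^{1/2}$, not to $\|u\|_{H^2}\sigma$; since $\Phi$ along the solution grows like $e^{Kt}$ with $K\sim\|u\|_{H^2}$, one gets $\dot\sigma=-\alpha(\rho)\sim -e^{Kt/2}$ and hence $\sigma(t)\sim\sigma_0 - A(e^{Bt}-1)$. The radius is then $r(t)=e^{\sigma(t)}$, which is where the double exponential in \eqref{estRad} comes from. Your claimed ODE $-\dot\sigma\leq C\|u\|_{H^2}\,\sigma$ would, if true, give the far stronger single-exponential bound $\sigma(t)\geq\sigma(0)e^{-C\mu t}$; but nothing in your argument produces the factor of $\sigma$ on the right, and the ``valid as long as $\hm{u(t)}{\sigma(t)}{s}\leq 2\hm{u_0}{1}{s}$'' restriction is precisely the obstruction Kato--Masuda is designed to remove by letting the analytic norm grow while $\sigma$ shrinks.

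Your Stage~3 also diverges from the paper. The paper does \emph{not} bootstrap time-analyticity by iterated $t$-differentiation; instead it uses the local $C^\omega$-in-time solution from Ovsyannikov (Theorem~\ref{thr2}), matches it with the Kato--Masuda solution via uniqueness in $H^\infty$, and then runs a continuation argument: if the maximal time $T^\ast$ of membership in $C^\omega([0,T],A(\sigma(T)))$ were finite, then $u(T^\ast)\in A(r_1)\hookrightarrow E^{\sigma_0,m}$ and Lemma~\ref{lemma3} restarts the analytic solution past $T^\ast$, a contradiction. Your direct-differentiation route can be made to work, but it requires a careful factorial count (each $\p_t^k u$ is a sum of $O(k!)$ terms, and one must show the combinatorics do not overwhelm the spatial analyticity bounds), which you have not supplied; the paper's route avoids this by leaning on the already-proven local-in-time analyticity.
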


As it will be clear in the remaining of this paper, both the index $3/2$ and the non-negativeness of the initial momentum are due to global well-posedness in Sobolev spaces and not from the `optimal' choice of initial data (see Theorem \ref{thr2}) in the Gevrey space. Moreover, the persistence of the sign of the initial momentum $m_0(x)$ for global well-posedness of equations of this type is a remarkable characteristic. We stress out, however, that here it is needed the stronger condition $m_0(x)\geq 0$ to guarantee that, from the inequality
\begin{align}\label{diffeo}
m(t,q(t,x))\geq m_0(x) \exp\left(-2\int_0^t(u-3u_x)(s,q(t,x))ds\right),
\end{align}
where $q$ is a certain increasing diffeomorphism in the line (see \cite{LY,SfF}) uniquely defined for $t$ in the existence interval for the solution $u$, the solution $u$ is nonnegative, and global well-posedness will follow from arguments in Besov spaces, see \cite{LY} for more details.



In the proof of Theorem \ref{thr1} we will obtain the very rough estimate \eqref{estRad} for the radius of spatial analyticity $r(t)$ for every $T>0$ fixed and $t\in[0,T]$. The double exponential provokes a very strong and fast decay, meaning that the estimate is rather trivial. In this sense, even though Theorem \ref{thr1} guarantees global analyticity of solutions, the problem of obtaining a better estimate for the radius $r(t)$ is still open. With the exception of very few cases known in the literature \cite{CP,HP,HP1}, this improvement is a very challenging and difficult problem.

Theorem \ref{thr1} can be used to study the metric \eqref{metric}. In fact, we can prove the following result:

\begin{theorem}\label{thr1a}
    Given $u_0\in G^{1,s}(\R)$ such that the initial momentum $m_0(x)\in L^1(\R)$ is positive, then for any $\epsilon>0$ the analytic solution $u\in C^{\omega}([0,\infty)\times \R)$ endows the strip $\mathcal{S} = (0,\epsilon)\times \R$ with the analytic metric \eqref{metric}.
\end{theorem}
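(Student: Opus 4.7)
The idea is to combine Theorem \ref{thr1}, which already guarantees the global analytic solution $u\in C^\omega([0,\infty)\times\R)$, with the pseudospherical construction reviewed in the introduction and the analysis of \cite{SfF}. Each coefficient of \eqref{metric} is a polynomial in $u$, $u_x$, $u_{xx}$ and the constants $\mu, m_1$; since $u$ is real-analytic in both variables, so are $u_x$ and $u_{xx}$, and hence the coefficients $E$, $F$, $G$ of $dx^2$, $2\,dx\,dt$, $dt^2$ are real-analytic on $[0,\infty)\times\R$, in particular on every strip $\mathcal{S}=(0,\epsilon)\times\R$. This gives the analyticity of the candidate metric for free.

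The substantive step is to show that $g$ is non-degenerate (and positive definite) on $\mathcal{S}$, so that it is genuinely a Riemannian metric. Writing $E=f_{11}^2+f_{21}^2$, $F=f_{11}f_{12}+f_{21}f_{22}$, $G=f_{12}^2+f_{22}^2$, the Lagrange identity yields
\begin{equation*}
EG-F^2 \;=\; (f_{11}f_{22}-f_{12}f_{21})^2 \;=\; m_1^2(1+\mu^2)\bigl(2u(u-u_{xx})+\psi\bigr)^2,
\end{equation*}
after the terms $\mu m(2um+\psi)$ cancel in the cross product. Moreover, viewing $E$ as a quadratic in $m=u-u_{xx}$, its discriminant equals $-4m_1^2(1+\mu^2)<0$, so $E>0$ pointwise. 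Thus positive definiteness of $g$ reduces to the non-vanishing of $2um+\psi$ on $\mathcal{S}$.

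For that, I would exploit the hypothesis that $m_0>0$ and $m_0\in L^1(\R)$. Positivity propagates by \eqref{diffeo}, so $m(t,x)>0$ for every $(t,x)\in[0,\infty)\times\R$; the representation $u(t,\cdot)=\tfrac{1}{2}e^{-|\cdot|}*m(t,\cdot)$ then yields $u(t,x)>0$ as well, and the $L^1$ assumption transfers decay at spatial infinity to both $u$ and $m$. Under these sign and integrability conditions, the pseudospherical analysis of \cite{SfF} establishes the non-vanishing of $2um+\psi$ on a local analytic strip; the global analyticity provided by Theorem \ref{thr1} allows one to extend that conclusion to the entire finite strip $\mathcal{S}$ of arbitrary width $\epsilon>0$.

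The principal obstacle is exactly this last point: positivity of $u$ and $m$ alone does \emph{not} immediately control $2um+\psi$, since the term $-2uu_{xx}$ appearing in it has no definite sign, so a naive bound from below fails. Overcoming it requires leveraging both the structural constraints that the pseudospherical $1$-forms impose on the solutions of \eqref{nonlocal} and the global-in-time a priori analyticity furnished by Theorem \ref{thr1}; the restriction to finite strips $(0,\epsilon)\times\R$ also accommodates the possibility that $2um+\psi$ may degenerate in the limit $t\to 0^+$ for boundary data merely in $G^{1,s}(\R)$.
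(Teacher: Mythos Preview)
Your reduction of the non-degeneracy of $g$ to the non-vanishing of $2um+\psi$ is correct, as is your propagation of positivity to $m$ and $u$. But you then get stuck at exactly the point you flag as the obstacle, and the vague appeal to \cite{SfF} does not close the gap: there is no direct pointwise lower bound on $2um+\psi$ available from sign and decay information alone, and \cite{SfF} does not supply one either.

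The paper bypasses this obstacle by an indirect argument you have not identified. Instead of estimating $2um+\psi$, it invokes the classification from \cite{FT} of all \emph{nongeneric} solutions of \eqref{nonlocal}, i.e.\ those for which $\omega_1\wedge\omega_2$ vanishes. These are explicit: $\pm\sqrt{ae^{-x}+b}$ when $m_1=-2$, and $\pm\sqrt{ae^{2x}+b}$ or $f(t)e^x$ when $m_1=1$. The decisive observation is that none of these belongs to $L^1(\R)$ in $x$ unless it is identically zero. The proof then shows that the global analytic solution satisfies $u(t,\cdot)\in L^1(\R)$ for every $t$: since $m(t,\cdot)>0$ and $u=g\ast m$, one has $\|u(t)\|_{L^1}=\|m(t)\|_{L^1}$, and this quantity is conserved in time (Theorem~3.1 of \cite{SfF}), hence equal to $\|m_0\|_{L^1}<\infty$. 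Together with $u>0$ (so $u\not\equiv 0$), this places $u$ outside the list of nongeneric solutions, forcing genericity and hence well-definedness of the metric on $\mathcal{S}$.

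So the hypothesis $m_0\in L^1(\R)$ is not used merely for decay, as you suggest, but to put $u(t,\cdot)$ into a function class disjoint from the explicit nongeneric family; and the conservation of $\|m(t)\|_{L^1}$ is what propagates this membership to all times. Your direct attack on $2um+\psi$ is the wrong route here.
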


Theorem \ref{thr1a} is a generalization of Theorem 5.1 of \cite{SfF} and will use, along with Theorem \ref{thr1}, the positiveness of the solution $u$ and the conservation of the $L^1(\R)$-norm of $u$. This is why we are requiring $m_0$ to be in $L^1(\R)$, a condition that is missing in \cite{SfF} but is crucial to the proof.

The paper is structured as the following: in Section \ref{sec2} we will present more details about Gevrey and auxiliary spaces that will be needed to prove Theorem \ref{thr1} and in Section \ref{sec3} we will prove Theorem \ref{thr1} by making use of the global well-posedness of \eqref{nonlocal} in Sobolev spaces, the Kato-Masuda machinery and embeddings of the spaces discussed in Section \ref{sec2}. Finally, in Section \ref{sec4} we will prove Theorem \ref{thr1a}.

\section{Function spaces and  well-posedness}\label{sec2}

In this section we will briefly discuss the theory of function spaces required and enunciate the well-posedness results that will be used to prove Theorem \ref{thr1}.

The first important property is that the Gevrey spaces form a decreasing scale of Banach spaces, that is, if $0<\sigma'<\sigma$, then $\Vert\cdot \Vert_{G^{\sigma',s}}\leq \Vert\cdot \Vert_{G^{\sigma,s}}$ and $G^{\sigma,s}(\R)\hookrightarrow G^{\sigma',s}(\R)$. Moreover, $\Vert \varphi\Vert_{G^{\sigma,s}}\leq \Vert \varphi\Vert_{G^{\sigma,s'}}$ for any real choices $s'>s$, and because $\Vert \varphi\Vert_{G^{\sigma,s}}=\Vert e^{\sigma |D_x|}\varphi\Vert_{H^{s}}$, it is easy to see that it is an algebra for $\sigma>0$ and $s>1/2$ so that
\begin{align}\label{}
\Vert\varphi\psi\Vert_{G^{\sigma,s}}\leq c_s\Vert\varphi\Vert_{G^{\sigma,s}}\Vert\psi\Vert_{G^{\sigma,s}},
\end{align}
for some positive constant $c_s$ depending only on $s$, and the properties
\begin{align}\label{eqlemma2.2}
        &\Vert\partial_x\varphi\Vert_{G^{\sigma',s}} \leq \frac{e^{-1}}{\sigma-\sigma'}\Vert\varphi\Vert_{G^{\sigma,s}},\quad s\geq 0, \quad 0<\sigma'<\sigma\leq 1,\\
        &\Vert\partial_x\varphi\Vert_{G^{\sigma,s}} \leq \Vert\varphi\Vert_{G^{\sigma,s+1}},\quad s\in\mathbb{R},\\
        &\Vert\Lambda^{-2}\varphi\Vert_{G^{\sigma,s}}=\Vert\varphi\Vert_{G^{\sigma,s-2}},\quad s\in\mathbb{R},
    \end{align}
can be easily determined, see \cite{BHP}. In the inequalities above and throughout the paper, $\Lambda^{-2}$ is defined by $\Lambda^{-2}\varphi = g\ast \varphi(x)$, where $g(x)=e^{-|x|}/2$ and $\ast$ is the usual convolution, for all $\varphi\in H^{s}(\R)$.

Given an initial data $u_0(x)\in G^{1,s}(\R)$, for $F(u)$ given by the right-hand side of \eqref{nonlocal}, $s>1/2$ and $0<\sigma<1$, we can use the relations above to obtain
$$\Vert F(u_0)\Vert_{G^{\sigma,s}}\leq \frac{M}{1-\sigma},$$
and, if for a given $R>0$ we $\Vert u-u_0\Vert_{G^{1,s}}<R$ and $\Vert v-u_0\Vert_{G^{1,s}}<R$, also
$$\Vert F(u)-F(v)\Vert_{G^{\sigma,s}}\leq \frac{L}{1-\sigma}\Vert u-v\Vert_{G^{1,s}},$$
where $M=3c_se^{-1}\Vert u_0\Vert_{G^{1,s}}^2$ and $L=6c_se^{-1}(R +\Vert u_0\Vert_{G^{1,s}})$, and $c_s$ is the constant provided by the algebra property.

Based on the two estimates, we can use the following result,  see Theorem 1 of \cite{BHPlocal} and references therein, to obtain local well-posedness of \eqref{nonlocal}:

\begin{proposition}[\textsc{Autonomous Ovsyannikov Theorem}]\label{teo1.5}
Let $X_{\delta}$ be a scale of decreasing Banach spaces for $0<\delta \leq 1$, that is, $X_\delta \subset X_{\delta'}, \Vert \cdot \Vert_{\delta'}\leq \Vert \cdot \Vert_{\delta}, 0<\delta'<\delta\leq 1,$ and consider the Cauchy problem
\begin{align}\label{1.0.8}
    \begin{cases}
    \displaystyle{\frac{du}{dt} = G(u(t))},\\
    u(0) = u_0.
    \end{cases}
\end{align}
Given $\delta_0\in(0,1]$ and $u_0\in X_{\delta_0}$, assume that $G$ satisfies the following conditions:
\begin{enumerate}
    \item For $0<\delta'<\delta<\delta_0$, $R>0$ and $a>0$, if the function $t\mapsto u(t)$ is holomorphic on $\{t\in \mathbb{C}; 0<|t|<a(\delta_0-\delta)$ with values in $X_{\delta}$ and $\sup\limits_{t<a(\delta_0-\delta)}\Vert u-u_0\Vert_{\delta}<R$, then the function $t\mapsto G(t,u(t))$ is holomorphic on the same set with values in $X_{\delta'}$.
    
    \item $G:X_{\delta}\rightarrow X_{\delta'}$ is well defined for any $0<\delta'<\delta<\delta_0$ and for any $R>0$ and  $u,v\in B(u_0,R)\subset X_{\delta}$, there exist positive constants $L$ and $M$ depending only on $u_0$ and $R$ such that
    \begin{align*}
        &\Vert G(u) - G(v)\Vert_{\delta'}\leq \frac{L}{\delta-\delta'}\Vert u-v\Vert_{\delta},\quad \Vert G(u_0)\Vert_{\delta}\leq \frac{M}{\delta_0-\delta},
    \end{align*}
\end{enumerate}
$0<\delta<\delta_0$. Then for
\begin{align}\label{lifespanAOT}
    T = \frac{R}{16LR+8M}
\end{align}
the initial value problem \eqref{1.0.8} has a unique solution $u\in C^{\omega}([0,T(\delta_0-\delta)),X_{\delta})$, for every $\delta\in (0,\delta_0)$, satisfying
\begin{align}\label{sup}
    \sup\limits_{|t|<T(\delta_0-\delta)}\Vert u(t)-u_0\Vert_{\delta}<R,\quad 0<\delta<\delta_0.
\end{align}
\end{proposition}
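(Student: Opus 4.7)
The plan is to construct the solution by Picard iteration in the scale $\{X_\delta\}$:
\[
u^{(0)}(t)=u_0,\qquad u^{(n+1)}(t)=u_0+\int_0^t G(u^{(n)}(s))\,ds.
\]
The structural obstacle is that each application of $G$ costs a factor $1/(\delta-\delta')$, so naively $n$ iterations would produce an unbounded $(\delta_0-\delta)^{-n}$. The classical remedy is to distribute the loss of regularity unevenly across iterations: for each fixed target level $\delta<\delta_0$ and each $n$, introduce intermediate scales $\delta_k=\delta+\tfrac{k}{n}(\delta_0-\delta)$, and bound the $k$-th Picard difference in $X_{\delta_{k-1}}$ in terms of the previous one in $X_{\delta_k}$.

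First I would prove inductively that for $|t|<T(\delta_0-\delta)$ with $T=R/(16LR+8M)$, the iterates remain in the ball $\|u^{(n)}(t)-u_0\|_\delta<R$ (so that hypothesis (2) applies legitimately at every step) and satisfy a geometric-type bound of the form
\[
\|u^{(n+1)}(t)-u^{(n)}(t)\|_\delta \leq M\,\bigl(8L|t|/(\delta_0-\delta)\bigr)^n\cdot\tfrac{n^n}{n!}.
\]
The inductive step combines the Lipschitz estimate from (2), which at each of the $n$ intermediate steps contributes a factor of order $Ln/(\delta_0-\delta)$, with the iterated time integral producing $|t|^n/n!$. The combinatorial factor $n^n/n!$ is tamed by Stirling's bound $n^n/n!\leq e^n$, and the specific choice of $T=R/(16LR+8M)$ is precisely what forces the resulting geometric ratio below $1/2$ on the disc $|t|<T(\delta_0-\delta)$, ensuring absolute summability and simultaneously keeping the partial sums inside the ball of radius $R$.

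Once the sequence is uniformly Cauchy on each complex disc $\{|t|<T(\delta_0-\delta)\}$ with values in $X_\delta$, the limit $u$ satisfies the fixed-point equation $u(t)=u_0+\int_0^t G(u(s))\,ds$, and the bound \eqref{sup} is inherited from the induction. Each Picard iterate is manifestly holomorphic in $t$ as a polynomial in iterated integrals, so hypothesis (1) combined with a Banach-valued Weierstrass theorem yields $u\in C^\omega$ into $X_\delta$; differentiating then recovers $du/dt=G(u)$. Uniqueness follows by subtracting two solutions $u,v$, iterating the Lipschitz bound along the same chain of intermediate scales, and obtaining a decay $(CL|t|/(\delta-\delta'))^n/n!$ which forces $u\equiv v$ as $n\to\infty$.

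The main obstacle is the combinatorial bookkeeping in the induction: one must simultaneously maintain the ball-containment $\|u^{(n)}(t)-u_0\|_\delta<R$ needed to invoke hypothesis (2), track the sharp $n$-dependence of the Picard differences through all intermediate scales, and verify that the universal constants $16$ and $8$ appearing in $T=R/(16LR+8M)$ emerge naturally from the Stirling-type compensation of $n^n/n!$ rather than being ad hoc.
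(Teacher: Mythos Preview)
The paper does not actually give a proof of this proposition: it is quoted verbatim as the \textsc{Autonomous Ovsyannikov Theorem} with a reference to Theorem~1 of \cite{BHPlocal} and earlier literature, and is then applied as a black box to obtain Theorem~\ref{thr2}. So there is no ``paper's own proof'' to compare against.

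That said, your sketch is exactly the standard argument behind the cited result: Picard iteration in the scale $\{X_\delta\}$, compensation of the $(\delta-\delta')^{-1}$ losses by inserting $n$ equally spaced intermediate levels $\delta_k=\delta+\tfrac{k}{n}(\delta_0-\delta)$, the inductive bound of the form $M(8L|t|/(\delta_0-\delta))^n n^n/n!$ on the Picard differences, Stirling to control $n^n/n!$, and the observation that the choice $T=R/(16LR+8M)$ makes the series geometric with ratio below $1/2$ while simultaneously keeping all iterates inside $B(u_0,R)$. Holomorphy via hypothesis~(1) and uniqueness by iterating the Lipschitz estimate are likewise the textbook steps. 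In short, you have correctly reconstructed the proof that the paper merely cites; there is no alternative route on display here to contrast with.
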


From the Autonomous Ovsyannikov Theorem we conclude that for $s>1/2$, $\sigma\in(0,1),$ $R=\Vert u_0\Vert_{G^{1,s}}$ and $$T = \frac{e}{216 c_s\Vert u_0\Vert_{G^{1,s}}}$$ there exists a unique solution $u\in C^{\omega}([0,T(1-\sigma)), G^{\sigma,s}(\R))$, completing the proof of the following important result:

\begin{theorem}\label{thr2}
     Given $u_0\in G^{1,s}(\R)$, with $s>1/2$, there exists $T=T(\Vert u_0\Vert_{G^{1,s}})>0$ such that for every $\sigma\in (0,1)$ the initial value problem \eqref{nonlocal} has a unique solution $u\in C^{\omega}([0,T(1-\sigma)), G^{\sigma,s}(\R))$.
\end{theorem}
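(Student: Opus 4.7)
The plan is to view Theorem \ref{thr2} as a direct application of the Autonomous Ovsyannikov Theorem (Proposition \ref{teo1.5}) to the decreasing scale $X_\sigma := G^{\sigma,s}(\R)$, $0 < \sigma \leq 1$, with $\delta_0 = 1$ and $G(u) = F(u)$, the right-hand side of \eqref{nonlocal}. The scale property is recorded at the beginning of Section \ref{sec2}, so the proof reduces to verifying hypotheses (1) and (2) of the proposition.

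For hypothesis (2) I would write $F(u) - F(v)$ using the factorization $u^2 - v^2 = (u-v)(u+v)$, so that the difference decomposes as three summands of the form $\partial_x A$, $\partial_x \Lambda^{-2} A$, and $\partial_x \Lambda^{-2} \partial_x A$, with $A := (u-v)(u+v)$. Each summand is bounded by the same three-step pattern: (i) peel off the outermost $\partial_x$ via the loss-of-derivative inequality in \eqref{eqlemma2.2}, producing the decisive factor $e^{-1}/(1-\sigma)$; (ii) strip $\Lambda^{-2}$ via the third identity of \eqref{eqlemma2.2}, and in the third summand absorb the inner $\partial_x$ via the second identity, accepting a shift of the Sobolev index that is immediately undone by the monotonicity $\|\varphi\|_{G^{\sigma,s'}} \leq \|\varphi\|_{G^{\sigma,s}}$ for $s' \leq s$; (iii) close with the algebra property at the original index $s > 1/2$, using $\|u+v\|_{G^{1,s}} \leq 2(R + \|u_0\|_{G^{1,s}})$ inside the ball. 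Summing the three contributions delivers the Lipschitz bound with $L = 6 c_s e^{-1}(R + \|u_0\|_{G^{1,s}})$, and the same three-step estimate applied directly to $F(u_0)$ yields the size bound with $M = 3 c_s e^{-1} \|u_0\|_{G^{1,s}}^2$.

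Hypothesis (1) is a mild consequence of the same estimates, since $F$ is a polynomial of degree two in $u$ composed with the continuous linear maps $\partial_x$ and $\Lambda^{-2}$; any holomorphic curve $t \mapsto u(t)$ with values in $X_\sigma$ is sent by $F$ into a holomorphic curve with values in $X_{\sigma'}$ for $\sigma' < \sigma$, by the very estimates of the previous paragraph combined with the algebra property applied pointwise in $t$. Finally, setting $R = \|u_0\|_{G^{1,s}}$ and substituting the $L$ and $M$ above into \eqref{lifespanAOT} returns the advertised lifespan $T = e/(216 c_s \|u_0\|_{G^{1,s}})$, and Proposition \ref{teo1.5} produces the unique solution $u \in C^\omega([0, T(1-\sigma)), G^{\sigma,s}(\R))$ for every $\sigma \in (0, 1)$. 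The only genuinely subtle point in the bookkeeping—and what I expect to be the main obstacle—is the order of the three steps in (2): the algebra property must be applied at the original index $s$ rather than at the shifted index $s - 2$ that emerges after stripping $\Lambda^{-2}$, since otherwise the threshold $s > 1/2$ would be tightened to $s > 5/2$ and the statement would be weaker than advertised.
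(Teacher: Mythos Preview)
Your proposal is correct and follows exactly the route the paper takes: apply the Autonomous Ovsyannikov Theorem on the scale $X_\sigma=G^{\sigma,s}(\R)$, verify hypothesis (2) via the three-term decomposition of $F$ together with the derivative, $\Lambda^{-2}$, monotonicity and algebra estimates listed at the start of Section~\ref{sec2}, and read off the same constants $M=3c_se^{-1}\Vert u_0\Vert_{G^{1,s}}^2$, $L=6c_se^{-1}(R+\Vert u_0\Vert_{G^{1,s}})$ and lifespan $T=e/(216c_s\Vert u_0\Vert_{G^{1,s}})$. The paper states these estimates without writing out the three-step pattern you spell out, so your account is simply a more detailed version of the same argument.
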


Consider also the Himonas-Misiolek space
$$E^{\sigma,m}(\R)=\{f\in C^{\infty}(\R); \hm{f}{\sigma}{m} = \sup\limits_{j\in\mathbb{Z}_+}\frac{\sigma^j(j+1)^2}{j!}\Vert \partial_x^jf\Vert_{H^{2m}}<\infty\}$$
for $\sigma>0$ and $m\in\mathbb{Z}_+$. For $0<\sigma<\sigma_0\leq 1$ in \cite{MLGL} the authors showed that for any given initial data $u_0\in E^{\sigma_0,1}(\R)$ the corresponding unique solution is locally analytic in time and belongs to $E^{\sigma,1}(\R)$ as a spatial function. This result can be extended for $m\geq 1$ in the Himonas-Misiolek space. We omit the proof for convenience, but it is essentially a repetition of the the arguments used for the proof of \eqref{thr2}, see also \cite{ProcEdin,misiolek,MLGL}, with properties
    \begin{align*}
        &\hm{\partial_x\varphi}{\sigma'}{m} \leq \frac{1}{\sigma-\sigma'}\hm{\varphi}{\sigma}{m},\\
        &\hm{\partial_x\varphi}{\sigma}{m} \leq \hm{\varphi}{\sigma}{m+1},\\
        &\hm{(1-\partial_x^2)^{-1}\varphi}{\sigma}{m+2}=\hm{\varphi}{\sigma}{m}.
    \end{align*}
for $0<\sigma'<\sigma\leq 1$, $m\geq1$ and $\varphi\in E_{\sigma,m}(\R)$, see \cite{ProcEdin}, to once again apply the Autonomous Ovsyanikov Theorem.

\begin{lemma}\label{lemma3}
    Given $u_0\in E^{\sigma_0,m}(\R)$, with $\sigma_0\in (0,1]$ and $m\geq 1$, then there exists $\epsilon=\epsilon(\hm{u_0}{\sigma_0}{m})>0$ and a unique solution $u\in C^{\omega}([0,\epsilon],E^{\sigma,m}(\R))$  of \eqref{nonlocal} for any $\sigma \in (0,\sigma_0)$.
\end{lemma}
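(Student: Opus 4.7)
The strategy is to apply the Autonomous Ovsyannikov Theorem (Proposition \ref{teo1.5}) to the equivalent form \eqref{nonlocal1}, with the decreasing scale of Banach spaces $X_\sigma = E^{\sigma,m}(\R)$ for $\sigma \in (0,\sigma_0]$, exactly mirroring the argument that produced Theorem \ref{thr2}. Set $G(u) = \partial_x u^2 - u^2 + \partial_x \Lambda^{-2} u^2 + \Lambda^{-2} u^2$. Since the three inequalities displayed right before the statement together with the algebra property of $E^{\sigma,m}(\R)$ (valid for $m\geq 1$, as $H^{2m}(\R)$ is an algebra for $2m\geq 1$, and the weights $\sigma^j(j+1)^2/j!$ obey a Cauchy-type product inequality) give tame control of products and of $\Lambda^{-2}$, all of the necessary structural estimates are in place. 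The holomorphy hypothesis (condition 1 of Proposition \ref{teo1.5}) follows from the same reasoning as in \cite{BHPlocal,ProcEdin}, namely composition of the holomorphic map $t\mapsto u(t)$ with the polynomial nonlinearity plus the bounded operators $\Lambda^{-2}$ and $\partial_x$ (with controlled loss).

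Next I would verify the two quantitative bounds of condition 2. Using the algebra property and the derivative-loss inequality, for $0<\sigma'<\sigma\leq \sigma_0$ one finds
\begin{align*}
\hm{G(u_0)}{\sigma}{m} \leq \frac{1}{\sigma_0-\sigma}\,\hm{u_0^2}{\sigma_0}{m} + \hm{u_0^2}{\sigma}{m} + \frac{1}{\sigma_0-\sigma}\hm{\Lambda^{-2}u_0^2}{\sigma_0}{m} + \hm{\Lambda^{-2}u_0^2}{\sigma}{m},
\end{align*}
and each summand is controlled by $C\,\hm{u_0}{\sigma_0}{m}^2$ using the algebra inequality and the fact that $\Lambda^{-2}$ gains two derivatives (hence is bounded on $E^{\sigma,m}$). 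One therefore obtains $\hm{G(u_0)}{\sigma}{m} \leq M/(\sigma_0-\sigma)$ with $M$ depending only on $\hm{u_0}{\sigma_0}{m}$. The Lipschitz estimate proceeds identically by writing $u^2 - v^2 = (u-v)(u+v)$, applying the algebra property in $E^{\sigma,m}$, and using the derivative-loss bound on the $\partial_x$ terms; for $u,v \in B(u_0,R)\subset E^{\sigma,m}(\R)$ this yields
\begin{align*}
\hm{G(u)-G(v)}{\sigma'}{m} \leq \frac{L}{\sigma-\sigma'}\,\hm{u-v}{\sigma}{m},
\end{align*}
with $L$ depending only on $R$ and $\hm{u_0}{\sigma_0}{m}$.

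With both conditions verified, Proposition \ref{teo1.5} (taking for instance $R = \hm{u_0}{\sigma_0}{m}$) produces a unique solution $u \in C^\omega([0,T(\sigma_0-\sigma)),E^{\sigma,m}(\R))$ for every $\sigma\in(0,\sigma_0)$, where $T = R/(16LR + 8M)$. Choosing $\epsilon = T(\sigma_0-\sigma)/2$ (or any smaller positive number) gives the interval $[0,\epsilon]$ in the statement, and since $\epsilon$ depends only on $R,L,M$, which depend only on $\hm{u_0}{\sigma_0}{m}$, we obtain the claimed dependence $\epsilon = \epsilon(\hm{u_0}{\sigma_0}{m})$.

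The step most likely to require care is the algebra estimate in $E^{\sigma,m}$: the Cauchy product combined with the factor $(j+1)^2/j!$ must be shown to close under multiplication without introducing extra losses in $\sigma$, so that the entire analysis depends only on the $H^{2m}$ algebra bound and not on the derivative-loss mechanism. This is exactly the estimate recorded in \cite{ProcEdin,misiolek}, which is why we invoke those references rather than reprove it here.
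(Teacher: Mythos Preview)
Your proposal is correct and follows exactly the approach indicated in the paper, which explicitly omits the proof and states it is ``essentially a repetition of the arguments used for the proof of Theorem \ref{thr2}'' via the Autonomous Ovsyannikov Theorem together with the three displayed $E^{\sigma,m}$ inequalities and the algebra property from \cite{ProcEdin,misiolek,MLGL}. The only cosmetic slip is that your choice $\epsilon = T(\sigma_0-\sigma)/2$ still carries a dependence on $\sigma$, whereas the lemma's phrasing suggests a single $\epsilon$; this is harmless since the intended reading (consistent with how the paper uses the lemma later) is that for each fixed $\sigma\in(0,\sigma_0)$ one obtains a lifespan depending on the data norm.
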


It is worth mentioning that if we allow $t<0$, then by a reflection of $t$ and $x$ the interval in Lemma \ref{lemma3} can be extended to $[-\epsilon,\epsilon]$. This means that if we have an initial condition $u(T^{\ast},x)$, then we guarantee the existence of a unique solution defined for $t\in [T^{\ast}-\epsilon,T^{\ast}+\epsilon]$.

Another very important result is the global well-posedness of \eqref{nonlocal} in Sobolev spaces:

\begin{lemma}[Theorem 5.7 of \cite{LY}]\label{lemma4}
    Let $u_0\in H^s(\R)$, $s>3/2$. If $m_0\geq 0$, then the corresponding solution $u$ belongs to $C([0,\infty), H^s(\R))$.
\end{lemma}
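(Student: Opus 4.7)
The plan is to combine the local Sobolev theory for \eqref{nonlocal} with a Beale--Kato--Majda--type blow-up criterion, then defeat the criterion by propagating the sign of the momentum along the characteristics $q(t,\cdot)$. First, the local well-posedness of \cite{MLGL} yields a unique maximal solution $u\in C([0,T^*),H^s(\R))$ with some $T^*>0$; a standard commutator/Moser estimate applied to the form \eqref{nonlocal1} produces a differential inequality of the shape
$$\frac{d}{dt}\|u(t)\|_{H^s}^2 \leq C\bigl(\|u\|_{L^\infty}+\|u_x\|_{L^\infty}\bigr)\|u(t)\|_{H^s}^2,$$
so that $T^*<\infty$ forces $\int_0^{T^*}(\|u(\tau)\|_{L^\infty}+\|u_x(\tau)\|_{L^\infty})\,d\tau=+\infty$.

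Next, I would use \eqref{diffeo}: since $m_0\geq 0$ and the exponential factor is strictly positive, $m(t,q(t,x))\geq 0$ on $[0,T^*)\times\R$, and because $q(t,\cdot)$ is a diffeomorphism of the line, this forces $m(t,\cdot)\geq 0$ pointwise. Writing $u=g\ast m$ with $g(x)=e^{-|x|}/2$ and splitting the convolution at $x$, a direct computation gives
$$u-u_x = e^{-x}\int_{-\infty}^{x}e^{y}m(t,y)\,dy, \qquad u+u_x = e^{x}\int_{x}^{\infty}e^{-y}m(t,y)\,dy,$$
both of which are non-negative under $m(t,\cdot)\geq 0$. Hence $u\geq 0$ and $|u_x|\leq u$ pointwise throughout the lifespan.

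Finally I would extract a uniform $L^\infty$-bound from a conserved first moment: integrating \eqref{nonlocal1} on $\R$, the $\partial_x$-terms vanish, while $\int\Lambda^{-2}u^2\,dx=\int u^2\,dx$ (using $\int g=1$) cancels the $-\int u^2$ coming from the local piece, yielding $\frac{d}{dt}\int u\,dx=0$ provided $u$ decays sufficiently at infinity. Together with $m\geq 0$ this gives $\|m(t)\|_{L^1}=\|m_0\|_{L^1}$, hence $\|u(t)\|_{L^\infty}\leq \|g\|_{L^\infty}\|m(t)\|_{L^1}\leq \tfrac{1}{2}\|m_0\|_{L^1}$ uniformly in $t$. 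Combined with $|u_x|\leq u$, this bounds both quantities inside the blow-up integral, and $T^*=+\infty$ follows.

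The main obstacle is precisely getting the uniform $\|u_x\|_{L^\infty}$-bound: \eqref{diffeo} by itself propagates only non-negativity, not size, so the pointwise estimate $|u_x|\leq u$ must be tied to a conserved scalar. The $L^1$-conservation above suffices under the natural companion hypothesis $m_0\in L^1$ (the same extra assumption that reappears in Theorem \ref{thr1a}); absent it, one must replace this step by a direct estimate on a suitable Sobolev norm together with the embedding $H^1(\R)\hookrightarrow L^\infty(\R)$, which is the technically delicate point handled in \cite{LY}.
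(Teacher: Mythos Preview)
The paper does not supply its own proof of this lemma; it is quoted verbatim as Theorem~5.7 of \cite{LY} and used as a black box, so there is no in-paper argument to compare against.

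Your sketch is internally sound and follows the standard template for CH/DP-type global existence: local theory plus a blow-up criterion controlled by $\|u\|_{L^\infty}+\|u_x\|_{L^\infty}$, sign propagation of $m$ along characteristics via \eqref{diffeo}, the pointwise inequality $|u_x|\le u$ from the explicit splitting of $g\ast m$, and a conserved scalar to cap $\|u\|_{L^\infty}$. The identities for $u\pm u_x$ and the cancellation $\int\Lambda^{-2}u^2=\int u^2$ in the conservation of $\int u\,dx$ are correct.

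The only genuine gap is the one you already flag: the closing bound $\|u(t)\|_{L^\infty}\le\tfrac12\|m_0\|_{L^1}$ needs $m_0\in L^1(\R)$, which is \emph{not} part of the hypothesis of Lemma~\ref{lemma4} and is not implied by $u_0\in H^s$ for $s>3/2$. As written, therefore, your argument proves a strictly weaker statement. The reference \cite{LY} handles the general case through Besov-space estimates that do not rely on an $L^1$ input; since you explicitly acknowledge this and defer to \cite{LY} for that step, your proposal is best read as a complete proof under the supplementary assumption $m_0\in L^1$, together with a correct diagnosis of what remains. Note, in particular, that Proposition~\ref{prop5} and Theorem~\ref{thr1} apply Lemma~\ref{lemma4} to data in $G^{1,s}(\R)$ without any $L^1$ hypothesis on $m_0$, so the full strength of \cite{LY} is genuinely required there and your argument alone would not feed those results; only in Theorem~\ref{thr1a}, where $m_0\in L^1$ is explicitly assumed, would your route be self-contained.
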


Finally, we introduce the embeddings needed to extend local analyticity of the solution to global. As in \cite{KM}, define the Kato-Masuda spaces $A(r)$, for a fixed $r>0$, of functions that can be analytically extended to a function on a strip of width $r$ and endowed with the norm
\begin{align}\label{2.0.1a}
    \Vert f\Vert_{\sigma,s}^2 = \sum\limits_{j=0}^{\infty} \frac{1}{(j!)^{2}}e^{2\sigma j} \Vert\partial_x^jf\Vert_{H^s}^2,
\end{align}
for $s\geq 0$ and every $\sigma\in\R$ such that $e^{\sigma}<r$. Moreover, for $H^{\infty}(\R) := \bigcap\limits_{s\geq 0}H^{s}(\R)$, the embeddings $G^{\sigma,s}(\R)\hookrightarrow A(\sigma)\hookrightarrow H^{\infty}(\R)$ hold, see Lemma 2.3 and Lemma 2.5 in \cite{BHP} and Lemma 2.2 in \cite{KM}, for $\sigma>0$ and $s\geq 0$. Due to the embeddings, restriction of the initial data to analytic in $G^{1,s}(\mathbb{R})$ allows us to formulate the following result:

\begin{proposition}\label{prop5}
    Let $u_0\in G^{1,s}(\R)$, with $s>3/2$ and suppose that $m_0$ is non-negative. Then \eqref{nonlocal} has a unique solution $u\in C([0,\infty),H^{\infty}(\R))$.
\end{proposition}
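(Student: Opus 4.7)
The plan is to derive Proposition \ref{prop5} by combining the Gevrey-to-Sobolev embedding recalled earlier in this section with the Sobolev global well-posedness provided by Lemma \ref{lemma4}, applied at every admissible regularity index.

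First, from the explicit expression
\[
\Vert u_0\Vert_{G^{1,s}}^2=\int_{\R}e^{2|\xi|}(1+|\xi|^2)^s|\hat u_0(\xi)|^2\,d\xi,
\]
the exponential weight $e^{2|\xi|}$ dominates every polynomial factor, so $u_0\in G^{1,s}(\R)$ immediately yields $u_0\in H^{s'}(\R)$ for every $s'\geq 0$; equivalently, the embedding chain $G^{1,s}(\R)\hookrightarrow A(\sigma)\hookrightarrow H^{\infty}(\R)$ recalled just above gives $u_0\in H^{\infty}(\R)$. In particular, the pointwise hypothesis $m_0=u_0-u_0''\geq 0$ is meaningful, and the sign condition required to invoke Lemma \ref{lemma4} is satisfied.

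Next, for each fixed $s'>3/2$, Lemma \ref{lemma4} provides a unique global solution $u^{(s')}\in C([0,\infty),H^{s'}(\R))$ of \eqref{nonlocal} with initial datum $u_0$. Fix a reference index $s_0\in(3/2,s\wedge 2]$; since $H^{s'}\hookrightarrow H^{s_0}$ whenever $s'\geq s_0$, each $u^{(s')}$ is in particular an $H^{s_0}$-solution, and uniqueness at level $s_0$ in Lemma \ref{lemma4} forces all the $u^{(s')}$ to coincide with a single function $u$ on $[0,\infty)\times\R$. Consequently $u(t,\cdot)\in H^{s'}(\R)$ for every $s'\geq 0$, i.e.\ $u(t,\cdot)\in H^{\infty}(\R)$ for all $t\geq 0$.

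Finally, continuity of $t\mapsto u(t,\cdot)$ in the Fréchet topology of $H^{\infty}(\R)$ amounts to continuity in every individual $H^{s'}$-norm, which is exactly the assertion of Lemma \ref{lemma4} at regularity $s'$; uniqueness at the $H^{\infty}$-level is inherited from uniqueness at any single index $s'>3/2$, since any candidate $H^{\infty}$-solution is in particular an $H^{s'}$-solution. There is no substantive obstacle here: the proof reduces to the bookkeeping task of gluing the Sobolev solutions at different regularities into a single $H^{\infty}$-solution, the only mild technical point being the invocation of uniqueness at the lowest admissible index to ensure consistency of the family $\{u^{(s')}\}_{s'>3/2}$.
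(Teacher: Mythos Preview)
Your argument is correct and follows essentially the same route as the paper's own proof: use the embedding $G^{1,s}(\R)\hookrightarrow H^{\infty}(\R)$ to place $u_0$ in every $H^{s'}$, invoke Lemma~\ref{lemma4} at each level, and conclude $u\in C([0,\infty),H^{\infty}(\R))$. If anything, your version is more explicit than the paper's about the uniqueness argument needed to identify the solutions $u^{(s')}$ across different regularity indices.
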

\begin{proof}\,
    Given $u_0\in G^{\sigma,s}(\R)\hookrightarrow H^{\infty}(\R)$, in particular we have that $u_0\in H^{s'}(\R)$ for $s'\geq 2$ and Lemma \ref{lemma4} guarantees that the unique global solution is in $C([0,\infty),H^{s'}{\R)})$ for $s'\geq 2$. Moreover, since $H^{2}(\R)\subset H^{s'}(\R)$ for $s'\in[0,2]$, we conclude that $u \in C([0,\infty),H^{s'}(\R))$ for $s'\in[0,2]$, which shows that $u \in C([0,\infty),H^{\infty}(\R))$.
\end{proof}

\section{Global analytic well-posedness}\label{sec3}

In this section we prove Theorem \ref{thr1}. The first step is to show that the global solution of Proposition \ref{prop5} is globally analytic in space and provide a rough bound for the radius of spatial analyticity through Kato-Masuda Theorem. For that we will consider the more convenient norm 
$$\kma{u}{\sigma}^2 = \sum\limits_{j=0}^m\frac{1}{(j!)^{2}}e^{2\sigma j} \sobolev{\partial_x^ju}{2}^2,\quad m\geq 0,$$
in $A(r)$ and recover \eqref{2.0.1a} as $m\to \infty$. Moreover, we observe that $\kma{u}{\sigma}\leq \km{u}{\sigma}{2}$.

Regarding our Cauchy problem \eqref{nonlocal}, given $u_0\in G^{1,s}(\R)$, with $s>3/2$, such that $m_0(x)$ is nonnegative, let $u$ be the corresponding global solution obtained in Proposition \ref{prop5}. Because $G^{1,s}(\R)\hookrightarrow A(1)$, let $\sigma_0 <0=:\bar{\sigma}$ so that $e^{\sigma_0}<1$. Moreover, for a fixed $T>0$, let $\mu = 1+\max\{\Vert u\Vert_{H^2}; t\in[0,T]\}$ and define $O = \{v\in H^{m+5}(\R); \Vert v\Vert_{H^{m+5}}<\mu\}\subset Z = H^{m+5}(\R)$ for $m\geq 0$. For $F:O\rightarrow X$ given by the right-hand side of \eqref{nonlocal1}, where $X= H^{m+2}(\R)$, and $$\Phi_{\sigma,m}(u) = \frac{1}{2}\kma{u}{\sigma}^2,$$ with $u\in H^{m+5}(\R)$, we have the right settings to use the Kato-Masuda Theorem, see Theorem 1 in \cite{KM} or Theorem 4.1 in \cite{BHP1}:

\begin{lemma}[\textsc{Kato-Masuda}]\label{prop4.2}
    Let $\{\Phi_{\sigma}:-\infty<\sigma<\bar{\sigma}\}$ be a family of real functions defined on an open set $O\subset Z$ for some $\bar{\sigma}\in\R$. Suppose that $F: O \rightarrow X$ is continuous and
    \begin{enumerate}
        \item[(a)] $D\Phi_{\cdot}(\cdot):\R\times Z\rightarrow \mathcal{L}(\R\times X; \R)$ given by
        $$D\Phi_{\sigma}(v)F(v):=\langle F(v)\,,\, D \Phi_{\sigma}(v)\rangle$$
        is continuous, where $D$ denotes the Fréchet derivative;
        \item[(b)] there exists $\bar{r}>0$ such that $$D\Phi_{\sigma}(v)F(v) \leq \beta(\Phi_{\sigma}(v)) + \alpha(\Phi_{\sigma}(v))\partial_{\sigma}\Phi_{\sigma}(v),$$
        for all $v\in O$ and some nonnegative continuous real functions $\alpha(r)$ and $\beta(r)$ well-defined for $-\infty<r<\bar{r}$.
    \end{enumerate}
    For $T>0$, let $u\in C([0,T];O)\cap C^1([0,T];X)$ be a solution of the initial value problem \eqref{nonlocal} such that there exists $b<\bar{\sigma}$ with $\Phi_{b}(u_0)<\bar{r}$. Finally, let $\rho(v)$ be the unique solution of
    \begin{align*}
        \begin{cases}
        \displaystyle{\frac{d\rho(t)}{dt} = \beta(\rho)},\\
        \rho(0) = \Phi_b(u_0),& t\geq 0.
        \end{cases}
    \end{align*}
    Then for
    $$\sigma(t) = b - \int_{0}^t \alpha(\rho(\tau))d\tau,\quad t\in[0,T_1],$$
    where $T_1>0$ is the lifespan of $\rho$, we have
    \begin{align}\label{5.0.2}
        \Phi_{\sigma(t)}(u) \leq \rho(t),\quad t\in[0,T'],\quad T'=\min\{T,T_1\}.
    \end{align}
\end{lemma}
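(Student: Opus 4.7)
My plan is to prove \eqref{5.0.2} by a standard bootstrap/continuation argument combining a chain-rule computation along the flow with a perturbation-limit device. First I would set $g(t) := \Phi_{\sigma(t)}(u(t))$; by hypothesis (a) this is differentiable in $t$, and the chain rule yields
\begin{equation*}
g'(t) = \sigma'(t)\,\partial_\sigma\Phi_{\sigma(t)}(u(t)) + D\Phi_{\sigma(t)}(u(t))\,F(u(t)) = -\alpha(\rho(t))\,\partial_\sigma\Phi_{\sigma(t)}(u(t)) + D\Phi_{\sigma(t)}(u(t))F(u(t)),
\end{equation*}
and applying hypothesis (b) at $v=u(t)$ produces the key differential inequality
\begin{equation*}
g'(t) \leq \beta(g(t)) + \bigl[\alpha(g(t)) - \alpha(\rho(t))\bigr]\,\partial_\sigma\Phi_{\sigma(t)}(u(t)).
\end{equation*}

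To compare $g$ with $\rho$, I would introduce, for each $\varepsilon>0$, the perturbed problem $\rho_\varepsilon' = \beta(\rho_\varepsilon) + \varepsilon$ with $\rho_\varepsilon(0) = \Phi_b(u_0) + \varepsilon$, and the associated $\sigma_\varepsilon(t) = b - \int_0^t \alpha(\rho_\varepsilon(\tau))\,d\tau$ and $g_\varepsilon(t) := \Phi_{\sigma_\varepsilon(t)}(u(t))$. The claim is that $g_\varepsilon(t) < \rho_\varepsilon(t)$ throughout the common existence interval. Strict inequality holds at $t=0$, and if it first failed at some $t_0>0$, the equality $g_\varepsilon(t_0) = \rho_\varepsilon(t_0)$ would make the $\alpha$-bracket in the differential inequality vanish, so that
\begin{equation*}
g_\varepsilon'(t_0) \leq \beta(\rho_\varepsilon(t_0)) < \beta(\rho_\varepsilon(t_0)) + \varepsilon = \rho_\varepsilon'(t_0),
\end{equation*}
which contradicts the fact that $g_\varepsilon - \rho_\varepsilon$ must have nonnegative derivative at a first zero approached from below.

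Letting $\varepsilon \to 0^+$ and using continuous dependence of ODEs on initial data and parameters together with the continuity of $(\sigma,v) \mapsto \Phi_\sigma(v)$ from hypothesis (a), I get $\rho_\varepsilon \to \rho$ and $\sigma_\varepsilon \to \sigma$ uniformly on compact subsets of $[0,T_1)$, so the strict bound $g_\varepsilon < \rho_\varepsilon$ passes to the nonstrict bound \eqref{5.0.2} on $[0,T']$.

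The main obstacle I expect is keeping every quantity in its admissible range throughout the argument: one must verify that $\sigma_\varepsilon(t) < \bar\sigma$ (immediate from $\sigma_\varepsilon(t) \leq b < \bar\sigma$), that $\rho_\varepsilon(t) < \bar r$ so hypothesis (b) is legitimately applicable (which forces one to work on subintervals $[0,T''] \subset [0,T')$ and then exhaust as $T'' \nearrow T'$), and that $u(t)$ remains in $O$ so the chain rule and hypothesis (b) are valid at each instant. These are routine continuation/continuity steps, but they must be threaded consistently through the $\varepsilon$-perturbation and the passage to the limit.
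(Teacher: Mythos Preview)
The paper does not prove this lemma: it is quoted from Kato--Masuda \cite{KM} (Theorem 1) and Barostichi--Himonas--Petronilho \cite{BHP1} (Theorem 4.1), so there is no in-paper argument to compare against. Your proposal is essentially the classical proof one finds in \cite{KM}: compute $\frac{d}{dt}\Phi_{\sigma(t)}(u(t))$ along the flow via the chain rule, use hypothesis (b) to obtain a differential inequality, and compare with an $\varepsilon$-perturbed majorant $\rho_\varepsilon$ by a first-crossing argument, then pass to the limit $\varepsilon\to 0^{+}$. This is correct in outline and matches the original Kato--Masuda strategy.

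One small point worth tightening: you invoke hypothesis (a) to justify the differentiability of $t\mapsto \Phi_{\sigma_\varepsilon(t)}(u(t))$, but (a) as stated asserts continuity of $(\sigma,v)\mapsto D\Phi_\sigma(v)$ as an element of $\mathcal{L}(\R\times X;\R)$; combined with $u\in C^1([0,T];X)$ and $\sigma_\varepsilon\in C^1$, this does give the chain rule, but you should say explicitly that the $\partial_\sigma$-derivative is also covered by (a) (the codomain $\mathcal{L}(\R\times X;\R)$ includes the $\sigma$-direction). Your handling of the admissible-range issues ($\sigma_\varepsilon\le b<\bar\sigma$, $\rho_\varepsilon<\bar r$ on slightly shrunken intervals, $u(t)\in O$) is exactly what is needed and is correctly flagged.
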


For the settings above, it is easy to check that $F:O\rightarrow X$ defined as the right-hand side of \eqref{nonlocal1} is continuous and item $(a)$ of Kato-Masuda Theorem is satisfied. We shall now obtain the bound for item (b):

\begin{proposition}\label{prop7}
    Given $u\in H^{m+5}(\R), m\geq 0,$ for $\sigma\in \R$ we have the bound
    \begin{align*}
        \vert D\Phi_{\sigma,m}F(u)\vert \leq \bar{K}(\sobolev{u}{2})\Phi_{\sigma,m}(u) + \bar{\alpha}(\sobolev{u}{2},\Phi_{\sigma,m}(u))\partial_{\sigma}\Phi_{\sigma,m}(u),
    \end{align*}
    where $\bar{K}(p) = 144p$ and $\bar{\alpha}(p,q) =64q^{1/2}(4+3p).$
\end{proposition}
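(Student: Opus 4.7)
The approach is to compute the Fréchet derivative
\[
D\Phi_{\sigma,m}(u)F(u) = \sum_{j=0}^m \frac{e^{2\sigma j}}{(j!)^2}\langle \partial_x^j u,\partial_x^j F(u)\rangle_{H^2},
\]
and use \eqref{nonlocal1} to split $F(u)$ into its four summands $\partial_x(u^2)$, $-u^2$, $\partial_x\Lambda^{-2}(u^2)$, and $\Lambda^{-2}(u^2)$. Since $\Lambda^{-2}$ gains two derivatives (cf.\ Section \ref{sec2}), the two nonlocal summands produce $H^2$-estimates dominated, up to inert constants, by those coming from the two local ones, so the main task reduces to controlling $\sum_j \tfrac{e^{2\sigma j}}{(j!)^2}\langle \partial_x^j u,\partial_x^j(u^2)\rangle_{H^2}$ and $\sum_j \tfrac{e^{2\sigma j}}{(j!)^2}\langle \partial_x^j u,\partial_x^{j+1}(u^2)\rangle_{H^2}$. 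In the sequel I set $A_j := \tfrac{e^{\sigma j}}{j!}\|\partial_x^j u\|_{H^2}$, so that $2\Phi_{\sigma,m}(u)=\sum_{j=0}^m A_j^2$ and $\partial_\sigma\Phi_{\sigma,m}(u)=\sum_{j=0}^m jA_j^2$.

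For the ``non-derivative'' piece $u^2$, Leibniz combined with the $H^2$-algebra $\|fg\|_{H^2}\le c_2\|f\|_{H^2}\|g\|_{H^2}$ (which follows from the one-dimensional embedding $H^2\hookrightarrow L^\infty$) reduces the pairing to a Gevrey-weighted convolution of the form $c_2\sum_j A_j (A\ast A)_j$, after the exponents and factorials collapse the binomial factor. Splitting off the extremal $k=0$ and $k=j$ terms of the convolution isolates the prefactor $2A_0=2\|u\|_{H^2}$, and the remaining ``interior'' convolution is bounded via a further Cauchy--Schwarz by a constant multiple of $\|u\|_{H^2}\Phi_{\sigma,m}(u)$; this whole contribution enters the $\bar K(\|u\|_{H^2})\Phi_{\sigma,m}(u)$ part of the target.

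For the ``transport'' piece $\partial_x(u^2)=2uu_x$, the analogous Leibniz expansion has total order $j+1$ and produces a single outlier $2u\partial_x^{j+1}u$ (the $k=0$ term) containing the maximal derivative $\partial_x^{j+1}u$; the remaining $k\ge 1$ terms involve only derivatives of order at most $j$ and are handled as above. To treat the outlier, I would expand $\langle \partial_x^j u,2u\partial_x^{j+1}u\rangle_{H^2}$ as a sum over derivatives of orders zero, one and two, and integrate by parts on the highest-order pieces using the skew-symmetry $2\int \phi v v_x\, dx=-\int \phi_x v^2\, dx$, the bounds $\|u_x\|_{L^\infty},\|u_{xx}\|_{L^2}\le c\|u\|_{H^2}$ coming from $H^2\hookrightarrow C^{3/2}$ controlling the surviving integrals. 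The contribution splits into a diagonal remainder $\|u\|_{H^2}\|\partial_x^j u\|_{H^2}^2$, absorbed into $\bar K\Phi_{\sigma,m}$, and an off-diagonal remainder of shape $\|u\|_{H^2}\|\partial_x^j u\|_{H^2}\|\partial_x^{j+1}u\|_{H^2}$ (which persists because eliminating $\partial_x^{j+3}u$ from the $H^2$-pairing would require controlling $u_{xxx}$, unavailable from $\|u\|_{H^2}$ alone). Using $\tfrac{e^{\sigma j}}{j!}\|\partial_x^{j+1}u\|_{H^2}=(j+1)e^{-\sigma}A_{j+1}$ together with Cauchy--Schwarz the summed off-diagonal is dominated by $\|u\|_{H^2}\,e^{-\sigma}(2\Phi_{\sigma,m}+\partial_\sigma\Phi_{\sigma,m})^{1/2}(\partial_\sigma\Phi_{\sigma,m})^{1/2}$; splitting $\sqrt{a+b}\le\sqrt a+\sqrt b$ and converting stray factors of $\|u\|_{H^2}=A_0$ into $\Phi_{\sigma,m}^{1/2}$'s via the trivial bound $A_0\le\sqrt{2\Phi_{\sigma,m}}$, this fits into the $\Phi_{\sigma,m}^{1/2}(4+3\|u\|_{H^2})\partial_\sigma\Phi_{\sigma,m}$ form predicted by $\bar\alpha$, with lower-order residues being re-absorbed into $\Phi_{\sigma,m}$.

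The main difficulty is bookkeeping: tracking the algebra constant $c_2$, the embedding constants for $H^2\hookrightarrow L^\infty$ and $H^2\hookrightarrow C^{3/2}$, the combinatorial weights arising from Leibniz in the Gevrey scale, the multiplicities from expanding $\partial_x^2(u\partial_x^{j+1}u)$, and the control of the $e^{-\sigma}$ factor under $\sigma\le\bar\sigma=0$, in order to land precisely on $\bar K(p)=144p$ and $\bar\alpha(p,q)=64q^{1/2}(4+3p)$ after summing the four contributions from $F$. Apart from this, the argument consists of the $H^2$-algebra, the 1D Sobolev embedding, the mapping properties of $\Lambda^{-2}$ from Section \ref{sec2}, and a single Cauchy--Schwarz on the resulting double sum.
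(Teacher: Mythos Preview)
Your decomposition into four summands and the general plan of handling the transport outlier by integration by parts are correct, but two of the core estimates are wrong.

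First, the claim that the ``interior'' Leibniz convolution from $u^2$ is bounded by a constant times $\|u\|_{H^2}\Phi_{\sigma,m}(u)$ is false. With $A_j$ as you define them, the quantity in question is $\sum_{j}\sum_{\ell=1}^{j-1}A_jA_\ell A_{j-\ell}$; taking $A_0=\dots=A_m=1$ gives a sum of order $m^2$, while $\|u\|_{H^2}\Phi_{\sigma,m}=A_0\cdot\tfrac12\sum A_j^2$ is only of order $m$. The correct bound requires the Kato--Masuda combinatorial lemma (Lemma~3.1 of \cite{KM}, quoted in the paper as Lemma~\ref{lemma7}), which gives $\sum_{j}\sum_{\ell=1}^{j}A_jA_\ell A_{j-\ell}\le \|u\|_{\sigma,2,m}\,\partial_\sigma\|u\|_{\sigma,2,m}^2$, i.e.\ a $\Phi^{1/2}\partial_\sigma\Phi$ contribution. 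This is precisely the mechanism producing the $\bar\alpha$ term for the $u^2$, $\Lambda^{-2}u^2$ and $\partial_x\Lambda^{-2}u^2$ pieces, and you have misattributed it entirely to $\bar K\Phi$.

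Second, the assertion that the outlier $\langle\partial_x^j u,2u\partial_x^{j+1}u\rangle_{H^2}$ leaves an unavoidable off-diagonal remainder $\|\partial_x^j u\|_{H^2}\|\partial_x^{j+1}u\|_{H^2}$ because ``eliminating $\partial_x^{j+3}u$ would require controlling $u_{xxx}$'' is incorrect: the worst piece $\int\partial_x^{j+2}u\cdot u\,\partial_x^{j+3}u\,dx$ integrates by parts to $-\tfrac12\int u_x(\partial_x^{j+2}u)^2\,dx$, needing only $\|u_x\|_{L^\infty}\le c\|u\|_{H^2}$, and the cross term $\int\partial_x^{j+2}u\cdot u_{xx}\,\partial_x^{j+1}u\,dx$ is estimated by $\|u_{xx}\|_{L^2}\|\partial_x^{j+2}u\|_{L^2}\|\partial_x^{j+1}u\|_{L^\infty}\le c\|u\|_{H^2}\|\partial_x^j u\|_{H^2}^2$. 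Thus the outlier lands entirely in $\|u\|_{H^2}\|\partial_x^j u\|_{H^2}^2$ and hence in $\bar K\Phi$. Your alternative route produces a factor $e^{-\sigma}$ together with $A_{m+1}$; since the proposition is stated for all $\sigma\in\R$ with $\sigma$-independent constants (and the application uses $\sigma<0$), the $e^{-\sigma}$ factor is unbounded and cannot be absorbed, so this approach does not close.
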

\begin{proof}\,
    We start by noticing that $\displaystyle{\frac{1}{2}\langle D\sobolev{\partial_x^ju}{2}^2\,,\,w\rangle = \langle \partial_x^jw\,,\, \partial_x^j u\rangle_{H^2}},$
    see \cite{BHP,KM}. Then by making $w = F(u)$ we obtain
    \begin{align}\label{innerproduct}
        \vert D\Phi_{\sigma,m}(u)F(u)\vert =&\left\vert \sum\limits_{j=0}^{m}\frac{e^{2\sigma j}}{(j!)^2}\langle\partial_x^j u\,,\,\partial_x^jF(u)\rangle_{H^2}\right\vert\nonumber\\
        \leq& 2\left\vert\sum\limits_{j=0}^{m}\frac{e^{2\sigma j}}{(j!)^2}\langle\partial_x^j u\,,\,\partial_x^j(uu_x)\rangle_{H^2} \right\vert+ \left\vert\sum\limits_{j=0}^{m}\frac{e^{2\sigma j}}{(j!)^2}\langle\partial_x^j u\,,\,\partial_x^{j+1}\Lambda^{-2}u^2\rangle_{H^2}\right\vert\nonumber\\
        & + \left\vert\sum\limits_{j=0}^{m}\frac{e^{2\sigma j}}{(j!)^2}\langle\partial_x^j u\,,\,\partial_x^ju^2\rangle_{H^2} \right\vert+ \left\vert\sum\limits_{j=0}^{m}\frac{e^{2\sigma j}}{(j!)^2}\langle\partial_x^j u\,,\,\partial_x^{j}\Lambda^{-2}u^2\rangle_{H^2}\right\vert.
    \end{align}
    From \cite{BHP1}, see equations (6.14) and (6.15) with $k=1$ in the aforementioned paper, we know that
    \begin{align}\label{eq3.1}
    \begin{aligned}
    \left\vert\sum\limits_{j=0}^{m}\frac{e^{2\sigma j}}{(j!)^2}\langle\partial_x^j u\,,\,\partial_x^j(uu_x)\rangle_{H^2} \right\vert \leq& \bar{K}_1(\sobolev{u}{2})\Phi_{\sigma,m}(u) \\
    &+ \alpha_1(\sobolev{u}{2},\Phi_{\sigma,m}(u))\partial_{\sigma}\Phi_{\sigma,m}(u),
    \end{aligned}
    \end{align}
    where $\bar{K_1}(p) = 32p$ and $\alpha_1(p,q) = 64(1+p)q^{1/2}$, and
    \begin{align}\label{eq3.2}
    \begin{aligned}
    \left\vert\sum\limits_{j=0}^{m}\frac{e^{2\sigma j}}{(j!)^2}\langle\partial_x^j u\,,\,\partial_x^{j+1}\Lambda^{-2}u^2\rangle_{H^2}\right\vert \leq& \frac{\bar{K}_1(\sobolev{u}{2})}{2}\Phi_{\sigma,m}(u) \\
    &+ \alpha_1(\sobolev{u}{2},\Phi_{\sigma,m}(u))\partial_{\sigma}\Phi_{\sigma,m}(u),
    \end{aligned}
    \end{align}
    so it remains to find similar bounds for the last two terms. We start with the first term:
    \begin{align}\label{eq3.3}
        \left\vert\sum\limits_{j=0}^{m}\frac{e^{2\sigma j}}{(j!)^2}\langle\partial_x^j u\,,\,\partial_x^ju^2\rangle_{H^2} \right\vert \leq& \vert\langle u\,,\,u^2\rangle_{H^2} \vert+\sum\limits_{j=1}^{m}\frac{e^{2\sigma j}}{(j!)^2}\left\vert\langle\partial_x^j u\,,\,\partial_x^ju^2\rangle_{H^2} \right\vert\nonumber\\
        \leq& 8\Vert u\Vert_{H^2}\Vert u\Vert^2_{H^2}+\sum\limits_{j=1}^{m}\frac{e^{2\sigma j}}{(j!)^2}\left\vert\langle\partial_x^j u\,,\,\partial_x^ju^2\rangle_{H^2} \right\vert\nonumber\\
        \leq&16\Vert u\Vert_{H^2}\Phi_{\sigma,m}(u)+\sum\limits_{j=1}^{m}\frac{e^{2\sigma j}}{(j!)^2}\left\vert\langle\partial_x^j u\,,\,\partial_x^ju^2\rangle_{H^2} \right\vert,
    \end{align}
    where in the last two inequalities we used the Cauchy-Schwarz inequality, the algebra property  $\Vert \varphi\psi\Vert_{H^2} \leq 8\Vert \varphi\Vert_{H^2}\Vert \psi\Vert_{H^2}$, see \cite{BHP1}, and the definition of $\Vert f\Vert_{\sigma,2,m}$ to conclude that $\Vert f\Vert_{H^2}\leq \Vert f\Vert_{\sigma,2,m}$. For the sum starting at $j=1$, after using the Leibniz formula for $\partial_x^j$ we can split the inner product of $H^2(\R)$ as  
    \begin{align*}
    \vert \langle \partial_x^j u, \partial_x^ju^2\rangle_{H^2}\vert =& \left\vert\left\langle \partial_x^ju \,,\, \sum\limits_{\ell=0}^j\binom{j}{\ell}\partial_x^{\ell}u\partial_x^{j-\ell}u \right\rangle_{H^2}\right\vert\\  
    \leq& \left\vert\langle \partial_x^j u\,,\, u\partial_x^ju\rangle_{H^2}\right\vert + \left\vert\left\langle \partial_x^ju \,,\, \sum\limits_{\ell=1}^j\binom{j}{\ell}\partial_x^{\ell}u\partial_x^{j-\ell}u \right\rangle_{H^2}\right\vert\\
    =:& (I) + (II),
    \end{align*}
    so that
    \begin{align}\label{eq3.4}
        \sum\limits_{j=1}^{m}\frac{e^{2\sigma j}}{(j!)^2}\left\vert\langle\partial_x^j u\,,\,\partial_x^ju^2\rangle_{H^2} \right\vert \leq \sum\limits_{j=1}^{m}\frac{e^{2\sigma j}}{(j!)^2}(I) + \sum\limits_{j=1}^{m}\frac{e^{2\sigma j}}{(j!)^2}(II).
    \end{align}
    The estimate for one $(I)$ is obtained through Cauchy-Schwarz and the algebra property:
    \begin{align*}
        (I) =& \vert\langle \partial_x^j u\,,\, u\partial_x^j u\rangle_{H^2}\vert        \leq\Vert \partial_x^j u\Vert_{H^2}\Vert u\partial_x^j u\Vert_{H^2}\leq 8 \Vert \partial_x^j u \Vert_{H^2}^2 \Vert u \Vert_{H^2},
    \end{align*}
    which means that
    \begin{align}\label{eq3.5}
    \sum\limits_{j=1}^{m}\frac{e^{2\sigma j}}{(j!)^2}(I) \leq& 8 \sum\limits_{j=0}^{m}\frac{e^{2\sigma j}}{(j!)^2}\Vert \partial_x^j u \Vert_{H^2}^2 \Vert u \Vert_{H^2} = 8 \Vert u\Vert_{H^2}\Vert u\Vert_{\sigma,2,m}^2 
    = 16\Vert u\Vert_{H^2} \Phi_{\sigma,m}(u).
    \end{align}
    
    The estimate for $(II)$ requires a little more work. In fact, we will need the following simpler version of Lemma 3.1 of \cite{KM}:

    \begin{lemma}[Lemma 3.1 of \cite{KM}]\label{lemma7}
        For $b_j=(j!)^{-1}e^{\sigma j}\Vert \partial_x^j u \Vert_{H^2}$, we have
        $$\sum\limits_{j=1}^m\sum\limits_{\ell=1}^j b_jb_{\ell}b_{j-\ell}\leq \Vert u\Vert_{\sigma,2,m}\partial_{\sigma}\Vert u\Vert_{\sigma,2,m}^2.$$
    \end{lemma}

    With Lemma \ref{lemma7} in hands, we estimate $(II)$ in the following way:
    \begin{align}\label{ast}
    \begin{aligned}
        (II)\leq& \sum\limits_{\ell=1}^j \frac{j!}{\ell!(j-\ell)!}\vert\left\langle \partial_x^ju \,,\,\partial_x^{\ell}u\partial_x^{j-\ell}u \right\rangle_{H^2}\vert \leq \sum\limits_{\ell=1}^j \frac{j!}{\ell!(j-\ell)!}\Vert \partial_x^ju\Vert_{H^2}\Vert \partial_x^{\ell}u\partial_x^{j-\ell}u\Vert_{H^2}\\
        \leq& 8 \sum\limits_{\ell=1}^j \frac{j!}{\ell!(j-\ell)!}\Vert \partial_x^ju\Vert_{H^2}\Vert \partial_x^{\ell}u\Vert_{H^2}\Vert\partial_x^{j-\ell}u\Vert_{H^2},
    \end{aligned}
    \end{align}
    where in the last inequality we once again made use of the algebra property in $H^2(\R)$. After rearranging terms, Lemma \ref{lemma7} yields
    \begin{align}\label{eq3.6}
    \begin{aligned}
        \left\vert\sum\limits_{j=1}^{m}\frac{e^{2\sigma j}}{(j!)^2}(II)\right\vert \leq& 8\sum\limits_{j=1}^{m}\sum\limits_{\ell=1}^j\frac{e^{2\sigma j}}{j!\ell!(j-\ell)!}\Vert \partial_x^ju\Vert_{H^2}\Vert \partial_x^{\ell}u\Vert_{H^2}\Vert\partial_x^{j-\ell}u\Vert_{H^2}\\
        =&8\sum\limits_{j=1}^{m}\sum\limits_{\ell=1}^j\left(\frac{e^{\sigma j}}{j!}\Vert \partial_x^ju\Vert_{H^2}\right)\left(\frac{e^{\sigma \ell}}{\ell!}\Vert \partial_x^{\ell}u\Vert_{H^2}\right)\left(\frac{e^{\sigma(j-\ell)}}{(j-\ell)!}\Vert\partial_x^{j-\ell}u\Vert_{H^2}\right)\\
        =& 8\sum\limits_{j=1}^{m}\sum\limits_{\ell=1}^j b_jb_{\ell}b_{j-\ell}\leq 8\Vert u\Vert_{\sigma,2,m}\partial_{\sigma}\Vert u\Vert_{\sigma,2,m}^2 \\
        \leq& 32 (\Phi_{\sigma,m}(u))^{1/2}\partial_{\sigma} \Phi_{\sigma,m}(u).
    \end{aligned}
    \end{align}
    
    As we plug \eqref{eq3.5} and \eqref{eq3.6} into \eqref{eq3.4} and then the respective result into \eqref{eq3.3}, we conclude that
    \begin{align}\label{eq3.7}
        \left\vert\sum\limits_{j=0}^{m}\frac{e^{2\sigma j}}{(j!)^2}\langle\partial_x^j u\,,\,\partial_x^ju^2\rangle_{H^2} \right\vert \leq \bar{K}_1(\Vert u\Vert_{H^2})\Phi_{\sigma,m}(u) + \alpha_2(\Phi_{\sigma,m}(u))\partial_{\sigma}\Phi_{\sigma,m}(u),
    \end{align}
    with $\alpha_2(q)=32q^{1/2}$.

    It remains to estimate the last term of \eqref{innerproduct}. Similarly to what was done previously, we have
    \begin{align*}
        \left\vert\sum\limits_{j=0}^{m}\frac{e^{2\sigma j}}{(j!)^2}\langle\partial_x^j u\,,\,\partial_x^{j}\Lambda^{-2}u^2\rangle_{H^2}\right\vert \leq& |\langle u\,,\,\Lambda^{-2}u^2\rangle_{H^2}| + \sum\limits_{j=1}^{m}\frac{e^{2\sigma j}}{(j!)^2}\left\vert\langle\partial_x^j u\,,\,\partial_x^{j}\Lambda^{-2}u^2\rangle_{H^2}\right\vert\\
        \leq& 16\Vert u\Vert_{H^2}\Phi_{\sigma,m}(u) +\sum\limits_{j=1}^{m}\frac{e^{2\sigma j}}{(j!)^2}(\tilde{I}) + \sum\limits_{j=1}^{m}\frac{e^{2\sigma j}}{(j!)^2}(\tilde{II}),
    \end{align*}
    where 
    $$(\tilde{I})=\left\vert\langle \partial_x^j u\,,\, \Lambda^{-2}(u\partial_x^ju)\rangle_{H^2}\right\vert,\quad (\tilde{II})= \left\vert\left\langle \partial_x^ju \,,\, \sum\limits_{\ell=1}^j\binom{j}{\ell}\Lambda^{-2}(\partial_x^{\ell}u\partial_x^{j-\ell}u) \right\rangle_{H^2}\right\vert.$$
    The term $\tilde{I}$ is estimated as
       \begin{align*}
        (\tilde{I}) = \left\vert\langle \partial_x^j u\,,\, \Lambda^{-2}(u\partial_x^ju)\rangle_{H^2}\right\vert \leq 8\Vert u\Vert_{H^2}\Vert \partial_x^ju\Vert_{H^2}^2,
    \end{align*}
    while we reobtain the previous case as we observe that
    \begin{align*}
        (\tilde{II}) \leq& \sum\limits_{\ell=1}^j \frac{j!}{\ell!(j-\ell)!}\vert\left\langle \partial_x^ju \,,\,\Lambda^{-2}(\partial_x^{\ell}u\partial_x^{j-\ell}u) \right\rangle_{H^2}\vert\\
        \leq& \sum\limits_{\ell=1}^j \frac{j!}{\ell!(j-\ell)!}\Vert\partial_x^j u\Vert_{H^2}\Vert\partial_x^{\ell}u\partial_x^{j-\ell}u\Vert_{H^0}\\
        \leq& \sum\limits_{\ell=1}^j \frac{j!}{\ell!(j-\ell)!}\Vert\partial_x^j u\Vert_{H^2}\Vert\partial_x^{\ell}u\partial_x^{j-\ell}u\Vert_{H^2}
    \end{align*}
    and use the corresponding estimates in \eqref{ast} and \eqref{eq3.6} for $(II)$ to conclude that 
    $$\sum\limits_{j=1}^{m}\frac{e^{2\sigma j}}{(j!)^2}(\tilde{II}) \leq 32 (\Phi_{\sigma,m}(u))^{1/2}\partial_{\sigma} \Phi_{\sigma,m}(u).$$
    Therefore, we obtain
    \begin{align}\label{eq3.9}
        \left\vert\sum\limits_{j=0}^{m}\frac{e^{2\sigma j}}{(j!)^2}\langle\partial_x^j u\,,\,\partial_x^{j}\Lambda^{-2}u^2\rangle_{H^2}\right\vert \leq& \bar{K}_1(\Vert u\Vert_{H^2})\Phi_{\sigma,m}(u) + \alpha_2(\Phi_{\sigma,m}(u))\partial_{\sigma}\Phi_{\sigma,m}(u)
    \end{align}
    for the same $\bar{K}_1(p)=32p$ and $\alpha_2(q)=32q^{1/2}$.
    
    Under substitution of the respective terms \eqref{eq3.1}-\eqref{eq3.3}, \eqref{eq3.7}-\eqref{eq3.9} in \eqref{innerproduct}, we conclude the result.
\end{proof}

We shall now complete the details of item $(b)$ of Kato-Masuda Theorem in a quite standard way. Let
\begin{align*}
    &K = \bar{K}(\mu),\quad \beta(p) = Kp,\quad p\geq 0,\\
    &\rho(t) = \frac{1}{2}\km{u_0}{\sigma_0}{2}^2e^{Kt},\quad \rho_m(t) = \frac{1}{2}\kma{u_0}{\sigma_0}^2e^{Kt},\\
    &\bar{r} = 1+ \max\{\rho(t);t\in[0,T]\}, \quad \alpha(p) = \bar{\alpha}(\mu,p),
\end{align*}
where $\bar{K}$ and $\bar{\alpha}$ are given as in Proposition \ref{prop7}. Observe that $\alpha(p)$ and $\beta(p)$ are continuous functions for $p<\bar{r}$, $\rho_m(t)\leq \rho(t)$ for all $t\in[0,T]$, $\rho_m(t)\to \rho(t)$ uniformly and
\begin{align*}
    \bar{K}(\Vert u\Vert_{H^2}) \leq \bar{K}(\mu) = K, \quad\bar{\alpha}(\Vert v\Vert_{H^2},\Phi_{\sigma,m}(u))\leq \bar{\alpha}(\mu,\Phi_{\sigma,m}(u)) = \alpha(\Phi_{\sigma,m}(v))
\end{align*}
for $v\in O$. From Proposition \ref{prop7}, $v\in O$ then implies 
\begin{align*}
        \vert D\Phi_{\sigma,m}F(v)\vert \leq \beta(\Phi_{\sigma,m}(v)) + \alpha(\Phi_{\sigma,m}(u))\partial_{\sigma}\Phi_{\sigma,m}(u),
    \end{align*}
    and item $(b)$ is finally satisfied.

    For $b:=\sigma_0<\bar{\sigma}=0$, we have
    $$\Phi_{\sigma_0,m}(u_0) = \frac{1}{2}\Vert u_0\Vert^2_{\sigma_0,2,m}\leq \frac{1}{2}\Vert u_0\Vert^2_{\sigma_0,2} = \rho(0)\leq \bar{r}-1 < \bar{r}$$
    and, furthermore, $\rho_m(t)$ is a solution of the Cauchy problem
    \begin{align*}
    \frac{d}{dt} \rho_m(t) = \beta(\rho_m(t)),\quad \rho_m(0) = \Phi_{\sigma_0,m}(u_0),\quad t\geq 0.
    \end{align*}
    Kato-Masuda Theorem then states that for
    $$\sigma_m(t) = \sigma_0 - \int_0^t \alpha(\rho_m(\tau))d\tau,\quad t\in[0,T],$$
    we obtain $\Phi_{\sigma_m(t),m}(u) \leq \rho_m(t)
    \leq \rho(t) = \frac{1}{2}\Vert u_0\Vert^2_{\sigma_0,2}e^{Kt}$ for $t\in[0,T]$. By letting $m\to \infty$, we obtain $\Vert u\Vert_{\sigma(t),2}^2\leq \Vert u_0\Vert^2_{\sigma_0,2}e^{Kt}$ and $u(t)\in A(r_1)$ for $r_1 = e^{\sigma(t)}$, for $t\in[0,T]$. The radius of spatial analyticity $\sigma(t)$ is given by
\begin{align}\label{5.2.2}
    \sigma(t) = \sigma_0 - \int_0^t\alpha(\rho(\tau))d\tau = \sigma_0 - A(e^{Bt}-1),
\end{align}
where $A=\frac{4\sqrt{2}}{9\mu}(4+3\mu))\km{u_0}{\sigma_0}{2}$ and  $B=72\mu$. Since $\mu\geq 1,$ we can estimate $A\leq \frac{28\sqrt{2}}{9}\km{u_0}{\sigma_0}{2}=:L_1$ and, after setting $L_2:=B$, we obtain the following lower bound for the radius of spatial analyticity:
\begin{align}\label{eq3.12}
r(t) = e^{\sigma(t)} \geq L_3e^{-L_1e^{L_2t}}, \quad L_3:=e^{\sigma_0}e^{L_1}.    
\end{align}

In summary, the use of the Kato-Masuda Theorem showed that given $u_0\in G^{1,s}(\R)$ with $s>3/2$ such that $m_0(x)$ is non-negative, the global solution $u$ belongs to $C([0,\infty), A(r))$. Moreover, for each $T>0$ fixed, we have the lower bound \eqref{eq3.12} for the radius $r(t), t \in [0,T]$.

The remaining of the proof is standard \cite{BHP1,ProcEdin}.
\begin{enumerate}
    \item We now take a step back: for the same initial value $u_0$, let $\tilde{u}\in C^{\omega}([0,\tilde{T}(1-\sigma)), G^{\sigma,s}(\R))$ be the unique local Gevrey solution obtained by Theorem \ref{thr2} for $\sigma \in (0,1)$, $s>3/2>1/2$ and a certain $\tilde{T}>0$. By letting $$T=\frac{\tilde{T}(1-\sigma)}{2},$$ that is, $\sigma(T)=\sigma = 1-2T/\tilde{T}$, the embeddings $G^{\sigma(T),s}(\R)\hookrightarrow A(\sigma(T))\hookrightarrow H^{\infty}(\R)$ tell us that $\tilde{u}\in C^{\omega}([0,T],A(\sigma(T)))\subset C^{\omega}([0,T],H^{\infty}(\R))$ and then Proposition \ref{prop5} tells us that $\tilde{u}=u$ for $t\in[0,T]$ and $u\in C^{\omega}([0,T],A(\sigma(T)))$.

    \item Let $$T^{\ast} = \sup\{T>0, u\in C^{\omega}([0,T], A(\sigma(T))),\,\,\text{for some}\,\, \sigma(T)>0\}$$
    and assume that $T^{\ast}$ is finite. Because $C^{\omega}([0,T],A(\sigma(T)))\subset C^{\omega}([0,T],H^{\infty}(\R))$, it is due to Kato-Masuda that the initial data $u(T^{\ast})\in A(r)$ is well-defined. Let $\sigma_0<\min \{1, r_1/e\}$ and then the converse of Lemma 5.1 of \cite{BHP1} tells that $u(T^{\ast})\in A(r_1)\subset E_{\sigma_0,m}(\R),$ for $m\geq3$ and $\sigma_0\in(0,1].$ From Lemma \ref{lemma3}, there exist $\epsilon>0$ and a unique solution $\tilde{u}\in C^{\omega}([0,\epsilon];E_{\delta,m}(\R))$ for $0<\delta<\sigma_0$ such that $\tilde{u}(0) = u(T^{\ast})$. Conversely, the embedding $E_{\delta,m}(\R)\hookrightarrow A(\delta)\subset H^{\infty}(\R),$ yields
    $$\tilde{u}\in C^{\omega}([0,\epsilon];H^{\infty}(\R))\subset C([0,\epsilon];H^{\infty}(\R))$$
    and once again Propostion \ref{prop5} says that $\tilde{u}(0) = u(T^{\ast})$, which means that $\tilde{u}(t) = u(T^{\ast}+t),$ for $t\in[0,\epsilon].$

    Let $s=T^{\ast}+t$ and then $u(s) = \tilde{u}(s-T^{\ast}),$ for $s\in[T^{\ast},T^{\ast}+\epsilon],$
    that is,
$$u \in C^{\omega}([T^{\ast},T^{\ast}+\epsilon];E_{\delta,m}(\R))\subset C^{\omega}([T^{\ast},T^{\ast}+\epsilon];A(\delta)).$$
Based on the definition of $T^{\ast}$, let $T>0$ be such that $T^{\ast}-\epsilon< T< T^{\ast}$ and the solution $u$ then belongs to $C^{\omega}([0,T];A(\sigma(T)))$ for some $\sigma(T)>0$.

Observe now that if $\sigma'\geq \sigma,$ then $A(\sigma')\subset A(\sigma).$ By defining $\tilde{\sigma} = \min\{\delta,\sigma(T)\}$, then
$$u\in C^{\omega}([0,T];A(\tilde{\sigma})),\quad\text{and}\quad u\in C^{\omega}([T^{\ast}-\epsilon,T^{\ast}+\epsilon];A(\tilde{\sigma})),$$
which says that $T^{\ast}$ cannot be the supremum. As a result of the contradiction, $T^{\ast}$ must be infinite and, for every $T>0$, there exists $r(T)>0$ such that $u\in C^{\omega}([0,T];A(r(T)))$.

\item To conclude Theorem \ref{thr1}, we use a result proved by Barostichi, Himonas and Petronilho in \cite{BHP1}, page 752, stating that item (ii) is enough to guarantee that $u \in C^{\omega}([0,\infty)\times \R)$.
\end{enumerate}

\section{Analytic metric}\label{sec4}

In this section we will proceed with the proof of Theorem \ref{thr1a}. Before that, let us briefly understand the required conditions to prove our result. Given any $\epsilon >0$, the strip $\mathcal{S}=(0,\epsilon)\times \R$ will be endowed with the metric \eqref{metric} if $u$ is a generic solution, that is, $u$ is a solution for which the condition $\omega_1\wedge \omega_2\neq 0$ is satisfied on the solutions of \eqref{nonlocal}. For further details, see \cite{DF,FT, SfF} and references therein.

First of all, let us recall a result from \cite{FT} stating that the only nongeneric solutions of \eqref{nonlocal} are:
\begin{align*}
    \phi(t,x) =& \pm \sqrt{a e^{-x} +b},\quad \text{if}\,\, m_1=-2,\\
    \phi(t,x) =& \pm \sqrt{ae^{2x}+b}\quad\text{or}\quad \phi(t,x) = f(t)e^x,\quad \text{if}\,\,m_1=1,
\end{align*}
where $a$ and $b$ are arbitrary constants and $f$ is an arbitrary smooth function. In particular, we observe that for none of the choices do the functions $\phi(t,\cdot)$ belong to $L^1(\R)$ unless identically zero. In this sense, following the discussion in \cite{SfF} that preceeds Theorem 5.1, it is sufficient to prove that: the solution $u$ does not vanish and $u(t,\cdot)\in L^1(\R)$.

Given an initial data $u_0\in G^{1,s}(\R)$, for $s>3/2$, such that $m_0(x)>0$ is an $L^1(\R)$ function, let $u$ be the unique global analytic solution given by Theorem \ref{thr1} restricted to a time slab $\mathcal{S} = (0,\epsilon)\times \R$, where $\epsilon>0$ is arbitrary. Recall that, from Proposition \ref{prop5}, $u(t,\cdot)\in H^{\infty}(\R)$, which means in particular that $u,u_x,u_{xx}$ go to zero as $|x|\to \infty$. Then from \eqref{diffeo} we have $m(t,\cdot)>0$ for any $t>0$, which means that $u(t,\cdot) = g(\cdot)\ast m(t,\cdot)>0$ and $$\Vert m(t)\Vert_{L^1} = \int_{\R} u(t,x)-u_{xx}(t,x)dx = \int_{\R} u(t,x)dx = \Vert u(t)\Vert_{L^1},$$
which means that $u(t,\cdot)\in L^1(\R)$ if and only if $m(t,\cdot)\in L^1(\R)$. But Theorem 3.1 of \cite{SfF} says that the quantity $\Vert m(t)\Vert_{L^1}$ is independent of time, that is, $\Vert m(t)\Vert_{L^1} = \Vert m_0\Vert_{L^1}$  for any $t$ and, therefore, $m(t,\cdot)\in L^1(\R)$.

To conclude the proof of Theorem \ref{thr1a}, it is enough to observe that $u$ is analytic in $\mathcal{S}$, so the components of the metric are analytic.



\section*{Acknowledgments}

This work was supported by the Royal Society under a Newton International Fellowship (reference number 201625) and by the Conselho Nacional de Desenvolvimento Científico e Tecnológico (process number 308884/2022-1). During the development of this paper, the author was a Lecturer at Universidade Federal do ABC and would also like to thank the institution for the support.

\end{document}